\newcommand{\abs}[1]{\left\vert#1\right\vert}
\documentclass{article}

\usepackage[top=1.5in, left=1in, right=1in, bottom=1in]{geometry}

\usepackage[utf8]{inputenc}
\usepackage{gensymb}
\usepackage{amsmath,amsthm,verbatim,amssymb,amsfonts,amscd}
\usepackage{graphicx}
\usepackage{systeme}
\usepackage{xcolor}
\usepackage{hyperref}
\usepackage{caption}
\usepackage{subcaption}
\usepackage{scrextend}
\usepackage{lineno}
\usepackage[symbol]{footmisc}

\newtheorem{theorem}{Theorem}[section]
\newtheorem{lemma}[theorem]{Lemma}
\newtheorem{remark}[theorem]{Remark}
\newtheorem{case}{Case}
\newtheorem{observation}[theorem]{Observation}
\newtheorem{subcase}{Sub-Case}[case]
\theoremstyle{definition}
\newtheorem{definition}[theorem]{Definition}

\DeclareMathOperator{\p}{P}
\DeclareMathOperator{\Z}{Z}
\DeclareMathOperator{\pt}{pt}
\DeclareMathOperator{\ct}{ct}
\DeclareMathOperator{\Term}{Term}

\begin{document}

	\title{The zero forcing numbers and propagation times of gear graphs and helm graphs}
	
\author{Sara Anderton\thanks{Hartford, CT 06040 (sarajanderton@gmail.com)}\and Rilee Burden\thanks{Department of Mathematics, University of North Texas, Denton, TX 76205 (rileeburden@my.unt.edu).} \and McKenzie Fontenot \thanks{Department of Mathematics, University of North Texas, Denton, TX 76205 (kenzie.fontenot@unt.edu).} \and Noah Fredrickson \thanks{Flower Mound, TX 75022 (noahkfredrickson@gmail.com)} \and Alexandria Kwon \thanks{Little Elm, TX 75068 (kwonlexie@gmail.com)} \and Sydney Le \thanks{Department of Electrical and Computer Engineering, Rice University, Houston, TX 77005 (sydney.le@rice.edu)} \and Kanno Mizozoe \thanks{Department of Mathematics, Trinity College, Hartford, CT 06106 (kanno.mizozoe@trincoll.edu).} \and Erin Raign \thanks{Department of Mathematics, University of North Texas, Denton, TX 76205
(erinraign@my.unt.edu).} \and August Sangalli \thanks{Department of Mathematics, University of Denver, Denver, CO 80210 (gus.sangalli@du.edu).} \and Houston Schuerger \thanks{Department of Mathematics, Trinity College, Hartford, CT 06106 (houston.schuerger@trincoll.edu).} \and Andrew Schwartz\thanks{Department of Mathematics, Southeast Missouri State University, Cape Girardeau, MO 63701 (aschwartz@semo.edu)}}

\maketitle

\begin{abstract}
Zero forcing is a dynamic coloring process on graphs. Initially, each vertex of a graph is assigned a color of either blue or white, and then a process begins by which blue vertices force white vertices to become blue.  The zero forcing number is the cardinality of the smallest set of initially blue vertices which can force the entire graph to become blue, and the propagation time is the minimum number of steps in such a zero forcing process.  In this paper we will determine the zero forcing numbers and propagation times of two infinite classes of graphs called gear graphs and helm graphs.
\end{abstract}

\noindent {\bf Keywords:} graph classes, zero forcing, propagation time, relaxed chronology, chain set.

\noindent {\bf AMS Subject Classification:} 05C50, 05C69, 05C57

\section{Introduction}

Graph theory is classified as a branch of discrete mathematics. It studies mathematical objects called graphs, which use dots (called vertices) to represent locations or elements of a set and edges between them to visualize the relationships between the objects. Due to the adaptability of the structure, graph theory offers tools for modeling and analyzing pairwise relationships between objects in both mathematics and applications outside of mathematics. 
In 2008, a graph theory concept, zero forcing, was introduced by the AIM Minimum Rank - Special Graphs Work Group \cite{AIM}.

Zero forcing is a graph coloring game where the goal is to color all vertices blue using the fewest initial blue vertices on a graph where each vertex is initially colored blue or white. The color change rule allows a blue vertex to force its only white neighbor to become blue, and the game ends when no more white vertices can be colored.
This mathematical technique for analyzing graphs is used to study a variety of graph parameters, such as the maximum nullity and minimum rank of graphs. Besides graph theory, it was independently introduced in two fields: in physics as quantum control theory \cite{graphinfect}, and in the monitoring of power grids as power domination \cite{powdom} (with the role of zero forcing evident in \cite{powdomzf}).

The propagation time of a zero forcing set, which describes the number of steps needed to color all vertices of a graph blue, is one of the indicators to evaluate the performance of zero forcing. This concept is relatively new, having been proposed only eleven years ago \cite{proptime}.

This paper focuses on two infinite classes of graphs, gear graphs and helm graphs.  While the definitions we use for each class of graphs is slightly more general than those standardly used, in both cases the graph class we consider contains the more traditional version of the graph class as a subclass.  In section 2 we determine the zero forcing number and propagation time of gear graphs, and in section 3 we determine the zero forcing number and propagation time of helm graphs.  However, first we will need a collection of general graph theoretical definitions as well as certain concepts central to the topics of zero forcing and propagation time.  

A graph, $G$, consists of a set of {\em vertices} $V(G)$ and a set $E(G)$ of pairs of vertices called {\em edges}.  To distinguish between sets of vertices of size $2$ and edges, given two vertices $u,v \in V(G)$, the set containing only the vertices $u$ and $v$ will be denoted $\{u,v\}$ and an edge between $u$ and $v$ will be denoted $uv$.  If $uv \in E(G)$, then we say that $u$ and $v$ are {\em adjacent} (or {\em neighbors}) and that the edge $uv$ is {\em incident} to the vertices $u$ and $v$. Given a vertex $v \in V(G)$, the neighborhood of $v$, denoted $N_G(v)$, is the set of vertices $N_G(v)=\{u:uv \in E(G)\}$ and the degree of $v$, denoted $\deg(v)$, is the number $\deg(v)=\abs{N_G(v)}$. The minimum degree of $G$, denoted $\delta(G)$, is the smallest degree among all vertices in $G$.  If $\deg(v)=1$, then $v$ is said to be a {\em pendant vertex}.  If $G$ and $H$ are graphs such that $V(H)\subseteq V(G)$ and $E(H) \subseteq E(G)$, then $H$ is said to be a {\em subgraph of $G$}. If, in addition, for each pair of vertices $u,v \in V(H)$ we have that $uv \in E(H)$ if and only if $uv \in E(G)$, then $H$ is said to be an {\em induced subgraph} of $G$.  

A {\em path} is a sequence of vertices $(v_0,v_1,\dots,v_k)$ such that for each $i$ with $0 \leq i \leq k-1$ we have that $v_iv_{i+1} \in E(G)$.  Similarly, a {\em cycle} is a sequence of vertices $(v_0,v_1,\dots,v_k,v_0)$ such that $v_0v_k \in E(G)$ and for each $i$ with $0 \leq i \leq k-1$ we have that $v_iv_{i+1} \in E(G)$.  Paths can also be viewed as graphs in and of themselves.  The {\em path graph} on $n$ vertices, denoted $P_n$, is the graph with vertex set $V(P_n)=\{v_i\}_{i=1}^n$ and edge set such that for the distinct pair $i,j \in \{1,2,\dots,n\}$, $v_iv_j \in E(P_n)$ if and only if $\abs{i-j}=1$.  A {\em path cover} $\mathcal Q$ of a graph $G$ is a collection of vertex-induced path subgraphs $Q_1,Q_2,\dots,Q_n$ such that $\{V(Q_i)\}_{i=1}^n$ is a partition of $V(G)$. The {\em path cover number} of $G$, denoted $\p(G)$, is the minimum cardinality of a path cover of $G$.  A path cover $\mathcal Q$ of a graph $G$ is said to be {\em minimum} if $\abs{\mathcal Q}=\p(G)$.

\begin{figure}[h]
  \centering
  \includegraphics[width=.35\linewidth]{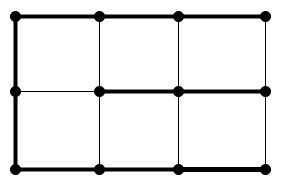}
  \caption{Example of a minimum path cover with path edges in bold}
  \label{path_cover_not_a_chain_set}
\end{figure}

Zero forcing is a dynamic coloring process on graphs.  Let $G$ be a graph. 
 Prior to beginning the process, the vertices of $G$ are all colored either blue or white.  Once this initial coloring is chosen, the zero forcing process begins.  To understand the process, one must first consider the zero forcing color change rule.  

\vspace{0.1in}

\begin{addmargin}[0.87cm]{0cm}

 \noindent \underline{Zero forcing color change rule:} If $u$ is blue and $v$ is the only white neighbor of $u$, then $u$ can force $v$ to be colored blue. If a vertex $u$ forces $v$, then we denote this by $u \rightarrow v$.

\end{addmargin}

\vspace{0.1in}

A zero forcing process may include multiple steps, called {\em time-steps}, and during each time-step multiple white vertices may become blue.  Once a vertex is blue, it will remain blue for the remainder of the zero forcing process.  During the process, the color change rule is applied repeatedly until either there are no white vertices remaining or no blue vertex has a unique white neighbor.   If $B$ is the set of vertices initially chosen to be blue and after a sufficient number of applications of the color change rule every vertex in $G$ is blue, then $B$ is a {\em zero forcing set} of $G$, otherwise $B$ is a {\em failed zero forcing set} of $G$.  The {\em zero forcing number} of $G$, denoted $\Z(G)$, is the minimum cardinality among the zero forcing sets of $G$.  If $B$ is a zero forcing set of $G$ and $\abs{B}=\Z(G)$, then we say that $B$ is a {\em minimum zero forcing set}.  During a zero forcing process, when there is more than one vertex which can force $v$, say $u_1$ and $u_2$, one must choose which vertex does the forcing.  Since multiple vertices may become blue during a single time-step it is sometimes convenient to say that a set of vertices $U$ forces another set of vertices $W$, denoted $U \rightarrow W$. Rigorously, this means that for all vertices $w \in W$ there is a vertex $u \in U$ such that $u \rightarrow w$.

\begin{figure}[h]
  \centering
  \includegraphics[width=.75\linewidth]{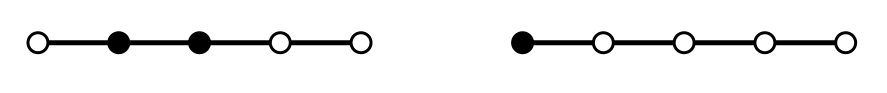}
  \caption{Examples of a zero forcing set (which is not minimum) and a minimum zero forcing set.}
  \label{Def2,2}
\end{figure}

In this paper we will be discussing multiple types of zero forcing processes.  The first and primary type of process is called a propagating family of forces.  A {\em propagating family of forces} $\mathcal F$ is an ordered collection of sets of forces $\{F^{(k)}\}_{k=1}^K$ such that at each time-step $k$ every white vertex that can become blue, according to the criteria outlined in the zero forcing color change rule, will become blue.    Let $B$ be a zero forcing set of a graph $G$ and $\mathcal F$ be a propagating family of forces of $B$ on $G$.  Define $B^{[0]}=B$, and for each integer $k \geq 1$, let $B^{(k)}$ denote the set of vertices which are forced during time-step $k$ and define $B^{[k]}=B^{(k)} \cup B^{[k-1]}$.  The {\em propagation time} of $B$ in $G$, denoted $\pt(G,B)$, is the least $k$ such that $B^{[k]}=V(G)$.  The {\em propagation time of $G$} is given by $\pt(G)=\min\{\pt(G,B):\abs{B}=\Z(G)\}$.  If $B$ is a minimum zero forcing set of $G$ with $\pt(G,B)=\pt(G)$, then $B$ is an {\em efficient zero forcing set}.

The second type of process is a relaxed chronology of forces.  To define this second type of process, introduced in \cite{PIPs}, first define $S(G,B)$ to be the collection of forces $u \rightarrow v$ such that $u \rightarrow v$ is a valid force according to the zero forcing color change rule when $B$ is the set of vertices which is currently blue.  A {\em relaxed chronology of forces} is an ordered collection of sets of forces, denoted $\mathcal F=\{F^{(k)}\}_{k=1}^K$, such that at each time-step $k$ if $E_{\mathcal F}^{[k-1]}$ denotes the set of vertices which are currently blue, then $F^{(k)} \subseteq S(G,E_{\mathcal F}^{[k-1]})$ such that for a given white vertex $v$ at most one force $u \rightarrow v \in F^{(k)}$, and $E_{\mathcal F}^{[K]}=V(G)$.  The {\em completion time} of a relaxed chronology of forces $\mathcal F=\{F^{(k)}\}_{k=1}^K$ is given by $\ct (\mathcal F)=K$.  For a zero forcing set $B$ and a relaxed chronology of forces $\mathcal F$, the sequence of sets $\{E_{\mathcal F}^{[k]}\}_{k=0}^K$ with $B=E_{\mathcal F}^{[0]} \subseteq E_{\mathcal F}^{[1]} \subseteq \dots \subseteq E_{\mathcal F}^{[K-1]} \subseteq E_{\mathcal F}^{[K]}=V(G)$, is called the {\em expansion sequence of $B$ induced by $\mathcal F$} and for each time-step $k$, $E_{\mathcal F}^{[k]}$ is called the $k$-th expansion of $B$ induced by $\mathcal F$. It is worth noting that if at each time-step $F^{(k)}$ is chosen to be maximal, then the relaxed chronology of forces $\mathcal F$ will be a propagating family of forces.  
Since the construction of a relaxed chronology allows for great flexibility in choosing the subset $F^{(k)} \subseteq S(G,E_{\mathcal F}^{[k-1]})$ at each time-step, it is often more convenient to utilize relaxed chronologies of forces during a proof.  However, since the aim of this paper will be to establish the propagation times of different graphs, it is important to note that for each minimum zero forcing set $B$ of $G$ and each relaxed chronology of forces $\mathcal F$ of $B$ on $G$, $\pt(G) \leq \ct(\mathcal F)$ and for at least one choice of $B$ and $\mathcal F$, $\pt(G)=\ct(\mathcal F)$.

For a given zero forcing set $B$ and a relaxed chronology of forces $\mathcal F=\{F^{(k)}\}_{k=1}^K$ of $B$ on $G$, a {\em forcing chain} induced by $\mathcal F$ is a sequence of vertices $(v_0,v_1,\dots, v_N)$ of $G$ such that $v_0 \in B$, $v_N$ does not force during $\mathcal F$, and for each $i$ with $0 \leq i \leq N-1$, $v_i \rightarrow v_{i+1} \in F^{(k)}$ for some $k$ with $0 \leq k \leq K$. The collection of forcing chains induced by $\mathcal F$ is a {\em chain set}.  Since $B$ is a zero forcing set of $G$, each vertex of $G$ is contained in some forcing chain.  Furthermore, since each vertex is forced by at most one other vertex and in turn performs at most one force during a relaxed chronology of forces, it follows that a chain set forms a path cover, providing the following result.

\begin{theorem}{\cite{param}}\label{Lemon}
	Let $G$ be a graph. Then $\p(G) \leq \Z(G)$. 
\end{theorem}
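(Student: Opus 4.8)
The plan is to produce a path cover of $G$ of size $\Z(G)$ directly from a zero forcing process, following the discussion immediately preceding the statement. First I would fix a minimum zero forcing set $B$ of $G$, so that $\abs{B}=\Z(G)$, and choose a propagating family of forces $\mathcal F$ of $B$ on $G$; this exists because $B$ is a zero forcing set, and it is in particular a relaxed chronology of forces. Let $\mathcal C$ denote the chain set induced by $\mathcal F$.

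Next I would check that $\mathcal C$ is a path cover of $G$ with $\abs{\mathcal C}=\abs{B}$. Viewing the forcing relation as assigning to each vertex the at most one vertex it forces, the facts that each vertex is forced by at most one vertex and forces at most one vertex force the forcing chains to be pairwise vertex-disjoint; and since $B$ is a zero forcing set, every vertex of $G$ lies in some forcing chain, so the vertex sets of the chains partition $V(G)$. Each chain begins at a vertex of $B$, and conversely no vertex of $B$ is ever forced, so each vertex of $B$ begins exactly one chain and no chain begins anywhere else; hence $\abs{\mathcal C}=\abs{B}=\Z(G)$.

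The point that genuinely requires care — and the step I expect to be the main obstacle, since it is exactly the detail the preceding paragraph glosses over — is verifying that each forcing chain, viewed as a subgraph, is a \emph{vertex-induced} path, i.e.\ has no chords. To see this I would first note that along a single chain $(v_0,v_1,\dots,v_N)$ the forces $v_i\to v_{i+1}$ must occur at strictly increasing time-steps, because $v_i$ has to already be blue at the time-step when it performs its force. Consequently, if $v_iv_j\in E(G)$ with $j\geq i+2$, then at the time-step when $v_i$ forces $v_{i+1}$ the vertex $v_j$ is still white, so $v_i$ would have two white neighbors, contradicting the zero forcing color change rule. Hence no such chord exists, so $\mathcal C$ is a path cover of $G$, and therefore $\p(G)\leq\abs{\mathcal C}=\Z(G)$.
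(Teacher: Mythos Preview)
Your proposal is correct and follows the same approach the paper indicates: the paper does not give a formal proof here (the theorem is cited from \cite{param}), but the sentence immediately preceding the statement sketches exactly your argument, namely that the chain set induced by a relaxed chronology of forces of a minimum zero forcing set is a path cover of size $\Z(G)$. Your additional verification that each forcing chain is a vertex-induced path (using the strictly increasing time-steps along a chain to rule out chords) is a genuine detail the paper's sketch omits, and it is needed given the paper's definition of path cover.
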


Vertices which are members of $B$ are said to be {\em initial} and vertices which do not perform a force during $\mathcal F$ are said to be {\em terminal}.  If a terminal vertex $v$ is initial, then $v$ is said to be {\em passively terminal}.  The set of vertices which are terminal are called the terminus of $\mathcal F$, denoted $\Term(\mathcal F)$.  The next lemma concerning the terminus appeared in \cite{PIPs} as a restatement of \cite[Theorem 2.6]{param} and a generalization of \cite[Observation 2.4]{proptime}, and will be of use later.

\begin{theorem}\label{term}\cite{PIPs, param, proptime}
Let $G$ be a graph, $B$ be a zero forcing set of $G$, and $\mathcal F$ be a relaxed chronology of forces of $B$ on $G$.  Then $\Term(\mathcal F)$ is a zero forcing set of $G$ and $\pt(G,\Term(\mathcal F)) \leq \pt(G, B)$.     
\end{theorem}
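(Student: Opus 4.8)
The plan is to establish both assertions at once by \emph{reversing} $\mathcal F$. Write $\mathcal F=\{F^{(k)}\}_{k=1}^{K}$ and let $\mathcal C$ be its chain set, which, as noted above, is a path cover of $G$; each chain has the form $(v_0,v_1,\dots,v_N)$ with $v_0\in B$ and $v_N\in\Term(\mathcal F)$. For a non-initial vertex $v$ let $t(v)$ be the time-step at which $v$ is forced during $\mathcal F$, so $u\to v\in F^{(t(v))}$ for the unique vertex $u$ forcing $v$. Define $\mathcal F'=\{H^{(k)}\}_{k=1}^{K}$ by reversing both the order of the time-steps and the direction of every force: $H^{(k)}=\{\,v\to u : u\to v\in F^{(K+1-k)}\,\}$. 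Equivalently, $\mathcal F'$ traverses each chain of $\mathcal C$ backwards, so its set of initial vertices is precisely $\{v_N:(v_0,\dots,v_N)\in\mathcal C\}=\Term(\mathcal F)$.

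First I would check that $\mathcal F'$ is a valid relaxed chronology of forces of $\Term(\mathcal F)$ on $G$; granting this, $\Term(\mathcal F)$ is a zero forcing set, which is the first assertion. Fix a force $v'\to v\in H^{(K+1-t(v'))}$, arising from $v\to v'\in F^{(t(v'))}$. We must verify that when $\mathcal F'$ applies it, $v'$ is blue and $v$ is the \emph{only} white neighbor of $v'$. The vertex $v'$ is already blue: if $v'$ performs no force in $\mathcal F$ then $v'\in\Term(\mathcal F)$, and otherwise $v'$ forces some $v''$ at a strictly later step $t(v'')>t(v')$, so $\mathcal F'$ recolors $v'$ at step $K+1-t(v'')<K+1-t(v')$. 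The vertex $v$ is still white, since $\mathcal F'$ recolors it only via this very force. The delicate point is that every other neighbor $u$ of $v'$ is already blue; the key observation is that if $u\in N_G(v')\setminus\{v\}$ performs a force $u\to w$ in $\mathcal F$, then $t(w)>t(v')$ — indeed, at the step $u$ forces $w$ the vertex $w$ is the unique white neighbor of $u$, and since $v'\in N_G(u)$ with $v'\neq w$ (as $v'$ is forced by $v\neq u$ while $w$ is forced by $u$, and no vertex is forced by two different vertices), $v'$ is already blue at step $t(w)-1$, i.e.\ $t(v')\le t(w)-1$. Hence such a $u$ is recolored by $\mathcal F'$ at step $K+1-t(w)\le K-t(v')$, before the force $v'\to v$ is applied; and any neighbor of $v'$ performing no force in $\mathcal F$ lies in $\Term(\mathcal F)$, hence is blue from the start of $\mathcal F'$. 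Thus $v$ is the unique white neighbor of $v'$ at that moment, so the force is legal (and, by the same uniqueness, no two forces of $H^{(k)}$ share a target). Since every vertex of $G$ either lies in $\Term(\mathcal F)$ or is recolored by $\mathcal F'$, the chronology $\mathcal F'$ terminates with all of $V(G)$ blue, and $\ct(\mathcal F')\le K=\ct(\mathcal F)$.

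For the propagation-time bound, I would use that from any fixed blue set the propagating family reaches each expansion no later than an arbitrary relaxed chronology does (an easy induction on $k$ shows the $k$-th propagating expansion contains the $k$-th expansion of any relaxed chronology of the same set), so $\pt(G,C)\le\ct(\mathcal G)$ for every zero forcing set $C$ and every relaxed chronology $\mathcal G$ of $C$. Applying this to $C=\Term(\mathcal F)$ and $\mathcal G=\mathcal F'$ gives $\pt(G,\Term(\mathcal F))\le\ct(\mathcal F')\le\ct(\mathcal F)$; for a propagating family $\mathcal F$ one has $\ct(\mathcal F)=\pt(G,B)$, which yields the stated bound $\pt(G,\Term(\mathcal F))\le\pt(G,B)$.

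The step I expect to be the main obstacle is the bookkeeping in the second paragraph: pinning down exactly which step of $\mathcal F'$ performs each force, proving the observation $t(w)>t(v')$ (in particular correctly invoking uniqueness of forcers to exclude $v'=w$), and cleanly separating the vertices that are terminal in $\mathcal F$ (hence in $\Term(\mathcal F)$ and blue at the outset of $\mathcal F'$) from those that still perform a force. Once the timing of $\mathcal F'$ is controlled, both conclusions follow directly.
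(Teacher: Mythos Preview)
The paper does not supply a proof of this theorem; it is quoted from the cited references, so there is no in-paper argument to compare against. Your reversal construction is exactly the standard argument used in those references: reversing each forcing chain and the order of the time-steps yields a valid relaxed chronology starting from $\Term(\mathcal F)$, and your verification that each reversed force is legal (via the inequality $t(w)>t(v')$ for neighbors $u$ of $v'$) is clean and correct. This establishes that $\Term(\mathcal F)$ is a zero forcing set and that $\pt(G,\Term(\mathcal F))\le\ct(\mathcal F')\le\ct(\mathcal F)$.

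There is one genuine issue, and you have in fact put your finger on it: your argument only yields $\pt(G,\Term(\mathcal F))\le\pt(G,B)$ when $\mathcal F$ is a \emph{propagating} family, since only then is $\ct(\mathcal F)=\pt(G,B)$. You cannot close this gap for arbitrary relaxed chronologies, because the inequality as literally stated is false in that generality. For a concrete counterexample, take the bowtie graph (two triangles $\{a,b,c\}$ and $\{c,d,e\}$ sharing the vertex $c$), let $B=\{b,c,e\}$, and let $\mathcal F$ be the relaxed chronology with $F^{(1)}=\{e\to d\}$ and $F^{(2)}=\{c\to a\}$. Then $\pt(G,B)=1$, while $\Term(\mathcal F)=\{a,b,d\}$ has $\pt(G,\{a,b,d\})=2$. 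So the propagation-time bound only holds in the propagating case (which is the only case the paper actually invokes later, in the $H_m$ argument). Your proof is therefore correct for exactly the version of the statement that is true and that the paper needs; the restriction you impose at the end is not a weakness of your argument but a necessary one.
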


One of the most natural examples of an infinite class of finite graphs is the wheel graph.  The wheel graph on $n+1$ vertices, 
denoted $W_{n+1}$, is the graph with vertex set $V(W_{n+1})=\{v_i\}_{i=0}^n$ and edge set such that $v_0v_i \in E(W_{n+1})$ for each $i \in \{1,2,\dots,n\}$, $v_iv_{i+1} \in E(W_{n_+1})$ for each $i \in \{1,2,\dots,n-1\}$, and $v_1v_n \in E(W_{n+1})$.  Other classes of graphs may be built upon the structure of wheel graphs, such as helm graphs and gear graphs, and thus considered modifications of the wheel graph.  In the next two sections we will consider the zero forcing numbers and propagation times of gear graphs and helm graphs.  For the purposes of this paper, the definitions we use for helm graphs and gear graphs are more general than the traditional definitions.  However, in our calculations the zero forcing numbers and propagation times of the more traditional versions of these graph classes will also be addressed.

\section{Gear graphs} 

\noindent We first address gear graphs, and have included their definition below:

\begin{definition}
The {\em generalized gear graph}, denoted $Gr(m,r)$, is the graph with vertex set $V\big{(}Gr(m,r)\big{)}=\{v_i\}_{i=0}^{m(r+1)}$  such that for $v_i,v_j \in V\big{(}Gr(m,r)\big{)}$ distinct, $v_iv_j \in E\big{(}Gr(m,r)\big{)}$ if and only if one of the following is true:
    \begin{itemize}
 	\item $i=0$ and $(r+1)\big{|}j$.
 	\item $i,j \not =0$ and $\abs{i-j}=1$
 	\item $\{i,j\}=\big{\{}1,m(r+1)\big{\}}$.
    \end{itemize} 
\end{definition}

Note that in the special case when $r=1$, the above definition for the gear graph becomes the traditional gear graph, denoted $Gr_n$.  We would now like to establish the zero forcing number of our more generalized gear graph, denoted $\Z\big{(}Gr(m,r)\big{)}$.  Before preceding it is helpful to note that it was shown in \cite{paramlong} that

\begin{theorem}\label{mindeg}\cite{paramlong}
    Let $G$ be a graph, $B$ be a zero forcing set of $G$, $\mathcal F$ be a relaxed chronology of forces of $B$ on $G$.  If $v \in V(G)$ such that $v\rightarrow u \in F^{(1)}$ for some vertex $u \in V(G)$, then at least $\deg(v)-1$ neighbors of $v$ are in $B$.  Furthermore, $\Z(G) \geq \delta(G)$.  
\end{theorem}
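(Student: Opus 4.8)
The plan is to treat the two assertions in turn, using the first as the engine for the second. For the first assertion, the key observation is that in order for $v$ to perform the force $v \rightarrow u$ during the \emph{first} time-step, $v$ must already be blue before any forces occur; that is, $v \in E_{\mathcal F}^{[0]} = B$. I would then unpack what it means for $v \rightarrow u$ to belong to $F^{(1)} \subseteq S(G,B)$: by the zero forcing color change rule, $u$ must be the unique white neighbor of $v$ at the moment the blue set is $B$. Hence every neighbor of $v$ other than $u$ is blue at time-step $0$, i.e. lies in $B$. Since $v$ has exactly $\deg(v)$ neighbors and at most one of them (namely $u$) is white, this produces at least $\deg(v)-1$ neighbors of $v$ in $B$, which is precisely the first claim.

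For the \emph{furthermore} clause, I would let $B$ be a minimum zero forcing set, so $|B| = \Z(G)$, and then choose a relaxed chronology of forces $\mathcal F$ of $B$ on $G$ (for instance a propagating family, which exists because $B$ is a zero forcing set). Split into two cases. If $B = V(G)$, then $\Z(G) = |V(G)| \ge \delta(G)$ trivially, since $\delta(G) \le |V(G)| - 1$ in any simple graph. Otherwise $B \neq V(G)$, so at least one force must occur before $V(G)$ is reached; choosing $\mathcal F$ so that $F^{(1)} \neq \emptyset$ (the propagating family works here, as its first set of forces is nonempty whenever $S(G,B) \neq \emptyset$), pick any $v \rightarrow u \in F^{(1)}$. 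By the first part, the $\deg(v)-1$ neighbors of $v$ other than $u$ lie in $B$, and $v$ itself lies in $B$; since $v \notin N_G(v)$, these are $\deg(v)$ \emph{distinct} vertices of $B$, so $\Z(G) = |B| \ge \deg(v) \ge \delta(G)$.

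The argument is essentially bookkeeping with the definitions, so I do not anticipate a genuine obstacle; the only points that need care are (i) justifying that we may assume $F^{(1)} \neq \emptyset$ in the second assertion, which is legitimate precisely because there we are free to select the relaxed chronology, and (ii) confirming in the final step that the $\deg(v)$ exhibited members of $B$ are distinct, which is immediate since a vertex is not its own neighbor. If desired, one can compress the last step by noting that the closed neighborhood of $v$ with $u$ removed is contained in $B$, so $|B| \ge \deg(v)$.
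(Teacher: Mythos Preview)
Your argument is correct. Note, however, that the paper does not supply its own proof of this theorem: it is quoted from \cite{paramlong} and stated without proof, so there is no in-paper argument to compare against. Your proof is the natural one and is exactly what one would expect the cited reference to contain; the only delicate points are the two you already flagged, namely that one may take $F^{(1)}\neq\emptyset$ when $B\neq V(G)$ (guaranteed since $B$ is a zero forcing set, so $S(G,B)\neq\emptyset$) and that $v$ together with $N_G(v)\setminus\{u\}$ gives $\deg(v)$ distinct elements of $B$.
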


\begin{figure}[h]
  \begin{center}
  \includegraphics[width=0.4\linewidth]{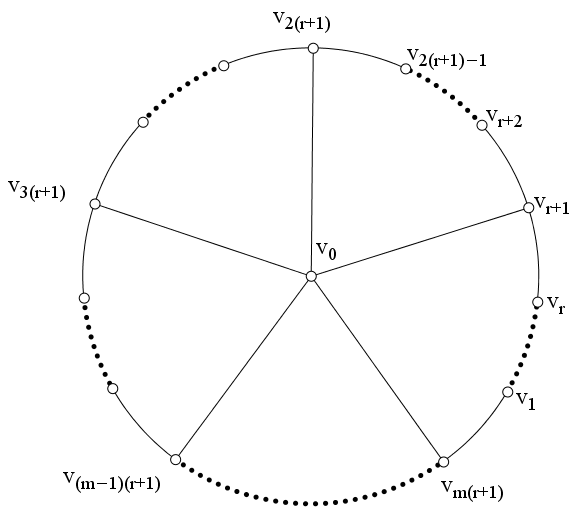}
  \caption{Generalized gear graph $G(m,r)$}
  \label{GenGear}
  \end{center}
\end{figure}

For the purpose of the following proofs, when discussing these graphs, we will sometimes refer to $v_0$ as the center vertex, any $v_i$ such that $(r+1)\big{|}i$ as a spoke vertex, and any vertex which is neither a center vertex nor a spoke vertex as an intermediate vertex. Even though the intermediate vertices and spoke vertices are only indexed up to $m(r+1)$, for certain arguments it will allow the discussion to remain more direct and streamlined if it is understood that for a vertex $v_M$, with $M > m(r+1)$, the writers are referencing a vertex $v_N$, where $N = M \bmod (mr+m)$.

\begin{theorem}
	$\Z\big{(}Gr(m,r)\big{)} = 3$.
\end{theorem}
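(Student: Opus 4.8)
The plan is to establish $\Z\big(Gr(m,r)\big)\le 3$ and $\Z\big(Gr(m,r)\big)\ge 3$ separately, exploiting the fact that $v_1,v_2,\dots,v_{m(r+1)}$ induce a cycle $C$ on which $v_0$ is attached only at the $m$ spoke vertices. In particular every intermediate vertex has degree $2$, every spoke vertex has degree $3$, $\deg(v_0)=m$, and hence $\delta\big(Gr(m,r)\big)=2$.

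For the upper bound I will exhibit the zero forcing set $B=\{v_0,v_1,v_2\}$. The propagation it induces is forced: at the first time-step $v_1\to v_{m(r+1)}$ and $v_2\to v_3$, and thereafter two forcing waves travel around $C$ in opposite directions, one continuing through $v_4,v_5,\dots$ and the other through $v_{m(r+1)-1},v_{m(r+1)-2},\dots$, until they meet. The crucial point is that since $v_0\in B$, whenever such a wave reaches a spoke vertex that spoke still has a unique white neighbour, namely the next vertex of $C$, so the wave is never obstructed; thus $B$ colours all of $C$, and $v_0$ was blue from the start. Hence $B$ is a zero forcing set and $\Z\big(Gr(m,r)\big)\le 3$.

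For the lower bound, Theorem~\ref{mindeg} already yields $\Z\big(Gr(m,r)\big)\ge\delta\big(Gr(m,r)\big)=2$, so I only need to rule out a zero forcing set $B$ with $\abs{B}=2$. Fix a propagating family of forces for such a $B$; since $\abs{B}<\abs{V\big(Gr(m,r)\big)}$ there is a force $v\to u$ at the first time-step, and $v\in B$. By Theorem~\ref{mindeg}, at least $\deg(v)-1$ neighbours of $v$ lie in $B$, and together with $v\in B$ and $\abs{B}=2$ this is impossible when $\deg(v)\ge 3$; hence $\deg(v)=2$, so $v$ is an intermediate vertex and $B$ is the vertex set of an edge of $C$ having at least one intermediate endpoint. (The only way to avoid this conclusion is $m=2$ with $v=v_0$, giving $B=\{v_0,v_{k(r+1)}\}$ for a spoke vertex $v_{k(r+1)}$; I will treat that case along the same lines.) Now the propagation from $B$ is completely determined: the blue set remains an arc of $C$ which can only grow at its two ends, and an end stops growing as soon as it reaches a spoke vertex, since that spoke then has two white neighbours — the next vertex of $C$ and the still-white vertex $v_0$. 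Therefore $v_0$ is never coloured and $B$ fails to force $V\big(Gr(m,r)\big)$, a contradiction. Consequently $\Z\big(Gr(m,r)\big)\ge 3$, and with the upper bound this gives $\Z\big(Gr(m,r)\big)=3$.

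The main obstacle is the bookkeeping in the lower bound: one must verify the stalling claim for every placement of the edge $B$ relative to the spokes — including those placements in which $B$ already contains a spoke vertex, and the slightly degenerate behaviour when $r=1$ (so that $v_2$ is itself a spoke) — checking in each case that the growing blue arc reaches a spoke vertex in both directions before $v_0$ could ever be forced, and that no other force is available. The exceptional case $m=2$, where $v_0$ has degree $2$ and may perform the first force, likewise ends in a stall ($v_0$ forces the second spoke and then nothing further can happen), so it does not change the conclusion.
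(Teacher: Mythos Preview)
Your proposal is correct and follows essentially the same approach as the paper: exhibit a size-$3$ zero forcing set containing $v_0$ and two adjacent cycle vertices (you use $\{v_0,v_1,v_2\}$, the paper uses $\{v_0,v_{m(r+1)},v_1\}$; these have the same structure), and rule out $|B|=2$ by showing the blue arc on $C$ stalls once both ends are spokes with $v_0$ still white. The only notable difference is that you invoke Theorem~\ref{mindeg} to conclude directly that the first-forcing vertex has degree $2$, thereby collapsing the paper's three enumerated configurations into a single ``$B$ is a cycle edge with an intermediate endpoint'' case; this is a mild streamlining, and your extra care about $m=2$ (which the paper tacitly excludes by assuming $\deg(v_0)\ge 3$) does no harm.
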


\begin{proof}
	Since $\delta(Gr(m,r)) =2$, $\Z\big{(}Gr(m,r)\big{)} \geq 2$ by Theorem \ref{mindeg}.  Next, we would like to show $\Z \big( Gr(m,r) \big) \neq 2$. To this end, suppose by way of contradiction that there exists a zero forcing set of $Gr(m,r)$, say $B$, such that $\abs{B} =2$. When finding possible zero forcing sets, it is important to consider the degree of each vertex. In this case, $\deg(v_0) = m$, $\deg(v_{(r+1)k}) = 3$ where $k\in \{1,2,...,m\}$, and $\deg(v_{i}) = 2$ where $(r+1)\not | i$. It is clear that each vertex has degree greater than or equal to $2$. We know then, that in order for any forcing to occur with the vertices in $B$, the two vertices must be adjacent. There are only three possible configurations which satisfy this requirement. 
	
	Configuration 1: We consider $B= \{v_0,v_{(r+1)k}\}$. Note, $\deg(v_0) = m$ and $\deg(v_{(r+1)k}) = 3$. Therefore, since $v_0$ and $v_{(r+1)k}$ each have degree of at least three and thus two white neighbors, no forcing can occur and $B$ is a failed zero forcing set. 
	
	Configuration 2: Up to isomorphism, we consider $B = \{v_{(r+1)k}, v_{(r+1)k-1}\}$ for some $k$. Note that $\deg(v_{(r+1)k-1}) = 2$ and one of its adjacent neighbors, namely $v_{(r+1)k}$, is in $B$. Thus on time-step one, $v_{(r+1)k-1} \rightarrow v_{(r+1)k-2}$. After time-step $r$, however, no forcing can occur as $\deg(v_{(r+1)k}) = \deg(v_{(r+1)(k-1)}) = 3$ and each have two white neighbors. In particular, $v_{(r+1)k}$ is neighbored by $v_{(r+1)k+1}$ and $v_{0}$ which are both white, and $v_{(r+1)(k-1)}$ is neighbored by $v_{(r+1)(k-1)-1}$ and $v_0$ which are both white.  Thus $B$ is a failed zero forcing set.
	
Configuration 3: We consider $B= \{v_i,v_{i+1} \}$, where $r+1$ divides neither $i$ nor $i+1$. Note that since $\deg(v_i)=\deg(v_{i+1})=2$, on time-step one, $v_i\rightarrow v_{i-1}$ and $v_{i+1} \rightarrow v_{i+2}$.  This process can continue until the spoke vertices $v_{(r+1)k}$ and $v_{(r+1)(k+1)}$ are blue. However, since $\deg(v_{(r+1)(k+1)})=\deg(v_{(r+1)k})=3$, and they each only have one blue neighbor no more forcing can occur.  Thus $B$ is a failed zero forcing set.
	
	Since $Gr(m,r)$ has no possible zero forcing sets of size 2, we have established that $\Z\big{(}Gr(m,r)\big{)} \geq3$. To show that $\Z\big{(}Gr(m,r)\big{)} = 3$ we must find a zero forcing set of $Gr(m,r)$ of size 3. We will now construct such a zero forcing set. 
 Let $B = \{v_0, v_{m(r+1)}, v_{1}\}$. We define a relaxed chronology of forces $\mathcal F$ by requiring $v_0$ to remain passively terminal and allowing $v_{m(r+1)}$ and $v_{1}$ to initiate a forcing process along the cycle of spoke and intermediate vertices; in particular, on time-step $k$, $v_k \rightarrow v_{k+1}$ and $v_{m(r+1)-k+1} \rightarrow v_{m(r+1)-k}$. This process continues until every vertex of $Gr(m,r)$ is blue, and thus we have that  $\Z\big{(}Gr(m,r)\big{)} = 3$.
\end{proof}

\setcounter{case}{0}

\begin{figure}[h]
  \begin{center}
  \includegraphics[width=0.4\linewidth]{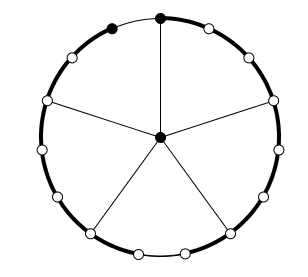}
  \caption{A zero forcing set and a chain set of $Gr(5,2)$ with chain edges in bold}
  \label{Gearchain}
  \end{center}
\end{figure}

We now look to the propagation time of gear graphs.

\begin{theorem}
$\text{~}$
\begin{itemize}
	\item In the case that $m$ is odd and $r=1$, $\pt\big{(}Gr(m,r)\big{)}=m-1$. \item Otherwise, $\pt\big{(}Gr(m,r)\big{)}=\big{\lceil} \frac{ m (r+1)}{2} \big{\rceil} -2.$
\end{itemize}
\end{theorem}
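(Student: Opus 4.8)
The plan is to pin down $\pt\big(Gr(m,r)\big)$ by establishing matching upper and lower bounds, with the exceptional regime ($m$ odd and $r=1$) isolated by a parity argument about the very last step of propagation.

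For the upper bound I would exhibit an efficient zero forcing set together with a relaxed chronology whose completion time meets the target. The workhorse is a seed of three consecutive cycle vertices centered at a spoke, $B=\{v_{s-1},v_s,v_{s+1}\}$ with $(r+1)\mid s$: on time-step $1$ the two outer vertices force outward along the cycle while $v_s$ forces the center $v_0$. The key point is that a spoke reached along the cycle is otherwise \emph{blocked} (it still has two white neighbours, its other cycle-neighbour and $v_0$), so only after $v_0$ is blue can the propagation stream past the spoke vertices; from time-step $2$ onward two forcing chains then sweep the cycle of intermediate and spoke vertices in opposite directions, and near the end $v_0$ itself forces the last remaining white spoke (which is possible exactly once the other $m-1$ spokes are blue). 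Counting the $m(r+1)-5$ cycle vertices that must still be swept after time-step $1$, two per step, together with the one-step saving from the center precisely when the ``meeting vertex'' of the two chains is itself a spoke, yields $\lceil m(r+1)/2\rceil-2$ in all but a few residue classes of $(m,r)$. For the remaining cases I would use a shifted seed such as $B=\{v_{s-2},v_{s-1},v_{s+1}\}$, which delays the creation of $v_0$ by one time-step but leaves the center free to be spent on a spoke that genuinely lies on a sweeping chain, recovering the missing step; for small $m$ the chord $v_1v_{m(r+1)}$ can additionally be used to reach a second spoke in two steps. Finally, in the regime $m$ odd, $r=1$ I would argue no seed does better than $m-1$: however the process is arranged, the two sweeping chains cover an odd-sized remaining arc whose final vertex is an intermediate vertex, and the center can never force an intermediate vertex, so one vertex is always left over after $\lceil m(r+1)/2\rceil-2=m-2$ steps.

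For the lower bound, let $B$ be any minimum zero forcing set and $\mathcal F$ a relaxed chronology of forces of $B$ on $Gr(m,r)$; by Theorem~\ref{term} we may pass to $\Term(\mathcal F)$ when convenient. The chain set of $\mathcal F$ is a path cover by exactly three forcing chains, so there are exactly $m(r+1)-2$ forces, at most three per time-step. The heart of the argument is that $v_0$ is a severe bottleneck: while $v_0$ is still white every forcing chain is confined to a single run of consecutive intermediate vertices between two spokes (no chain can cross a spoke), and $v_0$ lies on a single chain and performs at most one force, which by Theorem~\ref{mindeg}-type reasoning can occur only once $m-1$ spokes are blue, i.e.\ essentially at the end. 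Running through the three possibilities for $v_0$'s location in the chain set (passively terminal, an endpoint of a nontrivial chain, or interior to a chain), one shows that all but a bounded number of time-steps have at most two forces, which upgrades the crude count to a bound of the form $|B^{[k]}|\le 2k+5$ and hence $\pt\big(Gr(m,r),B\big)\ge\lceil m(r+1)/2\rceil-2$. In the case $m$ odd, $r=1$ this bound is tight only if the process is maximally efficient (two $3$-force steps and all other steps $2$-force with the blue set growing perfectly), and a parity analysis of that constrained situation — the two sweeping chains must clear an odd arc ending at an intermediate vertex that $v_0$ cannot touch — produces a contradiction, yielding $\pt\ge m-1$ there.

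The main obstacle will be the lower bound, specifically making the ``at most two forces per step, up to bounded exceptions'' claim rigorous: one must rule out chain sets in which all three chains fire simultaneously for many consecutive steps, and the delicate part is proving that the work performed while $v_0$ is still white, and the work performed after $v_0$ finally fires, together cost only a constant number of extra forces rather than a number growing with $r$. A secondary, organizational difficulty is consolidating the various upper-bound seeds into a single statement covering every parity of $m$ and $r$ together with the small-$m$ boundary cases.
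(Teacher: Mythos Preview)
Your proposal is essentially the paper's approach: the lower bound comes from the observation that the three forcing chains can fire simultaneously only on a time-step when $v_0$ is being forced or when $v_0$ forces, hence at most twice, giving $\pt\ge\lceil m(r+1)/2\rceil-2$; the upper bound comes from explicit seeds of three consecutive cycle vertices around a spoke (your $\{v_{s-1},v_s,v_{s+1}\}$ is the paper's $\{v_1,v_{m(r+1)},v_{m(r+1)-1}\}$), with a shifted seed for the $m$ odd, $r$ odd, $r>1$ case. The one place the paper differs is the exceptional case $m$ odd, $r=1$: rather than your parity analysis of the final step, the paper simply observes that up to isomorphism there are only three minimum zero forcing sets of $Gr(m,1)$ and checks each directly, which sidesteps the delicate endgame argument you flag as the main obstacle.
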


\begin{figure}
    \centering
    \begin{subfigure}[b]{0.24\textwidth}
    \centering
    \includegraphics[width=\textwidth]{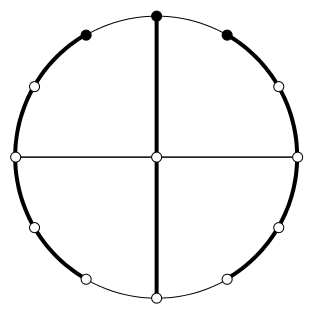}
    \caption{Case 1}
    \end{subfigure}
    \begin{subfigure}[b]{0.24\textwidth}
    \centering
    \includegraphics[width=\textwidth]{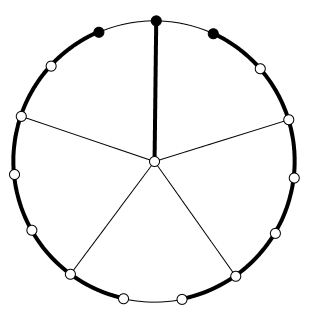}
    \caption{Case 2}
    \end{subfigure}
    \begin{subfigure}[b]{0.24\textwidth}
    \centering
    \includegraphics[width=\textwidth]{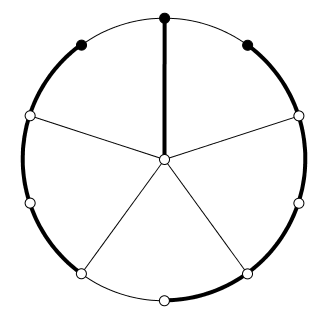}
    \caption{Case 3 [r=1]}
    \end{subfigure}
    \begin{subfigure}[b]{0.24\textwidth}
    \centering
\includegraphics[width=\textwidth]{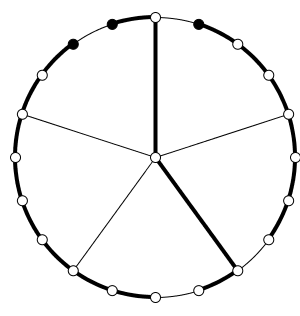}
    \caption{Case 3 [r$>$1]}
    \end{subfigure}
\end{figure}

\begin{proof}
	To determine the propagation time of the generalized gear graph, we will consider all possible zero forcing sets of size 3 and show that one of them produces the most efficient propagation time. Due to the degree of all the vertices being at least two, we need at least two vertices in our zero forcing set to be adjacent. There are only three possible ways to have two adjacent vertices on the gear graph: a spoke vertex and an intermediate vertex, two intermediate vertices, and a spoke vertex and a  center vertex. This leaves one vertex left to be chosen for the zero forcing set. Depending on the chosen vertex, either forcing will fail after at most $r$ time-steps after the second spoke vertex has been reached, or up to isomorphism, we will get two families of zero forcing sets: $B_1=\{v_0,v_i,v_{i+1}\}$ and $B_2=\{v_1,v_i,v_{i+1}\}$ where  
	\[i\in S= \{(m-1)(r+1),(m-1)(r+1)+1,...,m(r+1)-1\}.\]
	
    Since we only have three elements in our zero forcing set, on any given time-step we can force at most three vertices. Since two of our forcing chains will be passing along the cycle of spoke and intermediate vertices, we can only force three vertices when the center vertex is being forced or when the center vertex is forcing. Thus, this can happen at most twice. Thus, if we can find a relaxed chronology of forces where three vertices are forced twice, and there are no time-steps where only a single vertex is forced, then we will have found our propagation time. When $m$ is odd and $r$ is even we have an even number of vertices in our graph, so due to parity we will only be able to force three vertices once.  Since $\big{|}V\big{(}Gr(m,r)\big{)}\big{|}=m(r+1)+1$, in either case we have that 
    
    \[\pt\big{(}Gr(m,r)\big{)} \geq \left\lceil\frac{m(r+1)+1-3-6}{2}\right\rceil+2=\left\lceil\frac{m(r+1)}{2}\right\rceil-2.\]

    We will show that this bound is attained by a zero forcing set anytime that either $r>1$ or $m$ is even.  However, in the special case where $r=1$ and $m$ is odd, this will require proof by exhaustion.
    
    \begin{case}
    $m$ is even.
    \end{case}
    
    Let our zero forcing set be $B=\{v_1,v_{m(r+1)},v_{m(r+1)-1}\}$. On time-step 1, $v_{m(r+1)}\rightarrow v_0$, $v_1 \rightarrow v_2$, and $v_{m(r+1)-1} \rightarrow v_{m(r+1)-2}$. On time-step $k$, $v_k \rightarrow v_{k+1}$ and $v_{m(r+1)-k} \rightarrow v_{m(r+1)-k-1}$. Since we have taken $v_{m(r+1)}$ in our zero forcing set, and we force an even number of spoke vertices on each time-step, on the final time-step we are left with a single unforced spoke vertex. Because of this, on the final time-step, three vertices are able to force: $v_0 \rightarrow v_{\frac{m}{2}(r+1)}$, $v_{\frac{m}{2}(r+1)-2} \rightarrow v_{\frac{m}{2}(r+1)-1}$, and $v_{\frac{m}{2}(r+1)+2} \rightarrow v_{\frac{m}{2}(r+1)+1}$. Since we have taken three vertices in our zero forcing set, we were able to force three vertices on each of two time-steps, and two vertices on the remaining time-steps, we have that \[\pt\big{(}Gr(m,r)\big{)}=\pt\big{(}Gr(m,r),B\big{)}=\frac{m(r+1)+1-3-6}{2}+2=\frac{m(r+1)}{2}-2=\left\lceil\frac{m(r+1)}{2}\right\rceil-2.\]

    \begin{case}
    $m$ is odd, $r$ is even
    \end{case}
    \indent Let our zero forcing set be $B=\{v_1,v_{m(r+1)},v_{m(r+1)-1}\}$. On time-step 1, $v_{m(r+1)}\rightarrow v_0$, $v_1 \rightarrow v_2$, and $v_{m(r+1)-1} \rightarrow v_{m(r+1)-2}$. After the first time-step, there will be an even number of vertices left, so it will not be possible to have another time-step where three vertices are forced without having a time-step where only a single vertex is forced. At this point, all forcing will be done along the cycle of spoke and intermediate vertices with two vertices being forced at each time-step. Since we have taken three vertices in our zero forcing set, we were able to force three vertices on one time-step, and two on each remaining time-step, we have that 
    \[\pt\big{(}Gr(m,r)\big{)}=\pt\big{(}Gr(m,r),B\big{)}=\frac{m(r+1)+1-3-3}{2}+1=\frac{m(r+1)+1}{2}-2=\left\lceil\frac{m(r+1)}{2}\right\rceil-2.\]
    
    \begin{case}
    $m$ is odd, $r$ is odd
\end{case}

\vspace{-0.175in}

\begin{addmargin}[0.2in]{0in}
\begin{subcase} 
$r>1$
\end{subcase}

    \vspace{0.1in}
    
    Let our zero forcing set be $B=\{v_1,v_2,v_{m(r+1)-1}\}$. On time-step one, $v_1\rightarrow v_{m(r+1)}$ and $v_2\rightarrow v_3$. On time-step two, $v_3\rightarrow v_4$, $v_{m(r+1)}\rightarrow v_0$, and $v_{m(r+1)-1}\rightarrow v_{m(r+1)-2}$. On time-step $k$, $v_{k+1}\rightarrow v_{k+2}$ and $v_{m(r+1)-k+1}\rightarrow v_{m(r+1)-k}$. On time-step $\frac{m-1}{2}(r+1)-2$, the second to last spoke vertex, $v_{\frac{m-1}{2}(r+1)}$, is forced. Thus, on time-step $\frac{m-1}{2}(r+1)-1$, $v_0\rightarrow v_{\frac{m+1}{2}(r+1)}$, $v_{\frac{m-1}{2}(r+1)}\rightarrow v_{\frac{m-1}{2}(r+1)+1}$, and $v_{\frac{m+1}{2}(r+1)+2}\rightarrow v_{\frac{m+1}{2}(r+1)+1}$. On each remaining time-step, since $r$ is odd, two vertices are forced. Since we have taken three vertices in our zero forcing set, there are two time-steps on which we force three vertices, and two vertices are forced on every remaining time-step, we have that 
\end{addmargin}    

\[\pt\big{(}Gr(m,r)\big{)}=\pt\big{(}Gr(m,r),B\big{)}=\frac{m(r+1)+1-3-6}{2}+2=\frac{m(r+1)}{2}-2=\left\lceil\frac{m(r+1)}{2}\right\rceil-2.\]    

\vspace{-0.1in}

\begin{addmargin}[0.2in]{0in}
 \begin{subcase}  $r=1$
 \end{subcase}

\vspace{0.1in}
 
    First note that since we begin the process with an even number of white vertices, this is enough to guarantee that on average at most two vertices are forced during each time-step, and since $\big{|}V\big{(}G(m,r)\big{)}\big{|}=2m+1$ it follows that
    \[\pt\big{(}Gr(m,1)\big{)} \geq \frac{2m+1-3}{2}=m-1.\]
    Up to isomorphism, there are only three zero forcing sets of $Gr(m,1)$: $B_1=\{v_0, v_1,v_{2m}\}$, $B_2=\{v_1,v_{2m},v_{2m-1}\}$, and $B_3=\{v_1,v_3,v_{2m}\}$. Note that if $B_3$ is chosen as the zero forcing set, then on time-step one only a single vertex is forced, namely $v_2$. Next, note that if $B_1$ is chosen as the zero forcing set, then $v_0$ is already in our zero forcing set and thus there cannot be two time-steps where three vertices are forced.  We will now show that $B_2$ can force three vertices on the first time-step, one vertex on the final time-step, and two on every remaining time-step, and thus is an efficient zero forcing set. 
    
    When $B_2$ is chosen as the zero forcing set, on time-step 1, $v_1\rightarrow v_2$, $v_{2m}\rightarrow v_0$, and $v_{2m-1}\rightarrow v_{2m-2}$. At this point, all forcing will be done along the cycle of spoke and intermediate vertices with two vertices being forced at each time-step, except the final time-step when there is only a single vertex remaining. Since we have taken three vertices in our zero forcing set, we were able to force three vertices on time-step one, one vertex on the final time-step, and two vertices on each remaining time-step we have that 
\end{addmargin}

    \[\pt\big{(}Gr(m,r)\big{)}=\pt\big{(}Gr(m,1),B_2\big{)}=\frac{2m+1-3-3-1}{2}+2=m-1.\]

\end{proof} 

\section{Helm graphs}

We now consider helm graphs, for which a definition is provided below.

\begin{definition}
	 The helm graph, denoted $H(m,s)$, is the graph with vertex set $V\big{(}H(m,s)\big{)}=\{v_i\}_{i=0}^m \cup \{p_{i,j}\}_{i=1,}^m{}_{j=1}^s$ such that for $u_1,u_2 \in V\big{(}H(m,s)\big{)}$ distinct, $u_1u_2 \in E\big{(}H(m,s)\big{)}$ if and only if one of the following is true:
	\begin{itemize}
		\item $u_1 = v_0$ and $u_2 = v_{i}$ 
		\item $u_1=v_{i_1}$ and $u_2=v_{i_2}$ and in addition one of the following is also true:
		\begin{itemize}
			\item $\abs{i_1-i_2}=1$
			\item $\{i_1,i_2\}=\{1,m\}$
		\end{itemize}
		\item $u_1=v_i$ and $u_2=p_{i,j}$ for some $i \in \{1,2,...m\}$ and some $j \in \{1,...,s\}$.
	\end{itemize}
\end{definition}

\begin{figure}[h]
    \centering
    \includegraphics[width=.35\linewidth]{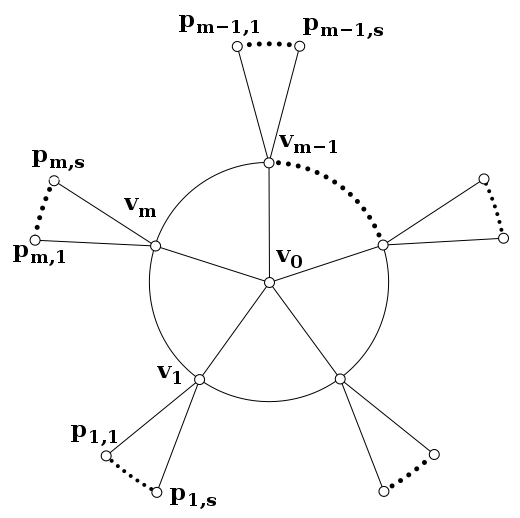}
    \caption{Generalized helm graph $H(m,s)$}
    \label{GenHelm}
\end{figure} 

For the duration of this paper we will take the convention that sets of vertices of the form $\{v_{f(j)}\}_{j=n}^m$, where $m<n$ and $f$ is some function on the natural numbers, will be considered empty.  As with gear graphs, we will sometimes refer to $v_0$ as the center vertex, and the remaining vertices which are neither pendant vertices nor the center vertex will be referred to as spoke vertices.  Again, even though spoke vertices are only indexed up to $m$, for certain arguments it will allow the discussion to remain more direct and streamlined if it is understood that for a vertex $v_M$, with $M > m$, the writers are referencing a vertex $v_N$, where $N = M \bmod m$.

Due to the nature of helm graphs, pendant vertices play a key role in zero forcing, so it will be beneficial to have extra terminology concerning them.

\begin{definition}
Let $m \geq 3$ be an integer and $B$ be the set of vertices of $H_m$ initially chosen to be blue. Two pendant vertices $p_i$ and $p_j$, are {\em consecutive} if their indices, $i$ and $j$, are consecutive natural numbers (modulo $m$). Pendant vertices that are members of $B$ are {\em forcing pendants}, and those which are not are {\em terminal pendants}.  Furthermore, if $p_j$, $p_{j+1}$, and $p_{j+2}$ are consecutive pendant vertices, but only $p_{j+1}$ is a forcing pendant, then $p_{j+1}$ is an {\em isolated forcing pendant}.  Maximal collections of consecutive forcing pendants are {\em groups of forcing pendants}, that is if $\{p_i\}_{i=j}^k$ is a set of consecutive forcing pendants, but $p_{j-1}$ and $p_{k+1}$ are terminal pendants, then $\{p_i\}_{i=j}^k$ is a group of forcing pendants.  Similarly, maximal collections of consecutive terminal pendants are {\em groups of terminal pendants}.   
\end{definition} 

Note that in the special case where $s=1$, the definition we provide for the helm graph becomes the traditional helm graph, denoted $H_n$. As we will see later, the case where $s>1$ is simpler to address for both $\Z(H(m,s))$ and $\pt(H(m,s))$.  For now we establish the results for the case when $s=1$.

\begin{remark}
It can be confirmed via exhaustion that $\Z(H_3)=3, \Z(H_4)=3, \pt(H_3)=2$, and $\pt(H_4)=3$.
\end{remark}

\begin{lemma}\label{helmlem}
For $m \geq 4$, $\Z(H_m) \geq \left\lceil \frac{m}{2} \right\rceil$.
\end{lemma}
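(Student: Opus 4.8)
The plan is to obtain the bound from the path cover number and Theorem \ref{Lemon}, which gives $\p(H_m) \le \Z(H_m)$. So it suffices to prove $\p(H_m) \ge \left\lceil \frac{m}{2} \right\rceil$, and for this I would argue directly about the pendant vertices of $H_m$.

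First I would recall the relevant structure: in $H_m = H(m,1)$ each pendant vertex $p_i := p_{i,1}$ has degree $1$, with the spoke vertex $v_i$ as its unique neighbor, and there are $m$ such pendants. Let $\mathcal Q = \{Q_1,\dots,Q_k\}$ be an arbitrary path cover of $H_m$. Since each $Q_\ell$ is a vertex-induced path subgraph and $p_i$ has degree $1$ in $H_m$, $p_i$ has degree at most $1$ in whichever $Q_\ell$ contains it; hence $p_i$ is an endpoint of that path (and if $Q_\ell$ is a single vertex, then $Q_\ell = \{p_i\}$). The key counting step is then immediate: a path on at least two vertices has exactly two endpoints, and a path on a single vertex contains a single vertex, so each $Q_\ell$ contains at most two pendant vertices. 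Summing over the $k$ paths yields $m \le 2k$, whence $k \ge \left\lceil \frac{m}{2}\right\rceil$. As $\mathcal Q$ was arbitrary, $\p(H_m) \ge \left\lceil \frac{m}{2}\right\rceil$, and Theorem \ref{Lemon} finishes the argument.

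I do not anticipate a real obstacle here; the only point needing care is the degenerate singleton-path case, which is dispatched by observing that a one-vertex path contains at most one pendant, so the ``at most two pendants per path'' bound holds uniformly. A more pedestrian alternative would be to take a minimum zero forcing set $B$ with $|B| < \left\lceil \frac{m}{2}\right\rceil$ and track forces into and out of the pendant vertices to reach a contradiction, but the path-cover route is cleaner; it also in fact establishes the inequality for all $m \ge 3$, with the restriction $m \ge 4$ in the statement only reflecting that $H_3$ is treated separately in the preceding remark.
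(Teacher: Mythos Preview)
Your proposal is correct and matches the paper's proof essentially line for line: the paper also takes an arbitrary path cover, observes that each path contains at most two of the $m$ pendant vertices to get $\lceil m/2\rceil \le \p(H_m)$, and then applies Theorem~\ref{Lemon}. Your only additions are the explicit justification that degree-$1$ vertices must be endpoints and the handling of singleton paths, which the paper leaves implicit.
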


\begin{proof}
Let $\mathcal Q$ be a path cover of $H_m$.  Note that $m$ is the number of pendant vertices of $H_m$. Since each path $Q \in \mathcal Q$ may contain at most two pendant vertices, we have $\lceil \frac{m}{2} \rceil \leq \abs{\mathcal Q}$.  Since $\mathcal Q$ was an arbitrary path cover, it follows that $\lceil \frac{m}{2} \rceil \leq \p(H_m)$.  Finally, by Theorem \ref{Lemon}, $\Z(H_m) \geq \p(H_m) \geq \left\lceil \frac{m}{2} \right\rceil$.
\end{proof}

Since $ \lceil \frac{m}{2} \rceil \leq \Z(H_m)$, to establish the value of $\Z(H_m)$ it remains to show that for $m>4$, $\Z(H_m) \leq \lceil \frac{m}{2} \rceil$. To prove that $ \Z(H_m) \leq \lceil \frac{m}{2} \rceil$, we need only to find one set of vertices $B$ such that $\abs{B}= \lceil \frac{m}{2} \rceil$, and demonstrate a relaxed chronology of forces $\mathcal F=\{F^{(k)}\}_{k=1}^K$ of $B$ on $H_m$. Furthermore, this will also give us an upper bound for the propagation time of $H_m$, namely $\pt(H_m) \leq K$.

\begin{theorem}\label{upper_helm} Let $m \geq 5$, then $\Z(H_m)=\lceil \frac{m}{2} \rceil$ and
		$$ \pt(H_m) \leq \begin{cases} 
	6 & \text{if }m \bmod 4 \equiv 0\\
	4 &  \text{if }m \bmod 4 \equiv 1\\
	5 &  \text{if }m \bmod 4 \equiv 2 \text{ or } m \bmod 4 \equiv 3.
	\end{cases} $$ 
\end{theorem}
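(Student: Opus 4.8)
The lower bound $\Z(H_m)\ge\lceil m/2\rceil$ is already Lemma~\ref{helmlem}, so the plan is to exhibit, for each $m\ge 5$, a set $B$ with $\abs{B}=\lceil m/2\rceil$ together with a relaxed chronology of forces $\mathcal F$ of $B$ on $H_m$ that colors all of $V(H_m)$; this simultaneously gives $\Z(H_m)=\lceil m/2\rceil$ and, since $B$ is then a minimum zero forcing set, $\pt(H_m)\le\ct(\mathcal F)$. First I would record the local behavior near a pendant: since $p_i$'s only neighbor is $v_i$, the force $p_i\rightarrow v_i$ is available as soon as $p_i$ is blue, while a terminal pendant $p_i$ turns blue only after $v_{i-1},v_i,v_{i+1},v_0$ are all blue. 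In particular, if $B$ consists only of pendants, then in the first stage no spoke can force $v_0$ (or any other spoke) unless some $v_j$ has all of $v_{j-1},v_{j+1},p_j$ blue, i.e.\ unless $B$ contains three consecutive forcing pendants. This is what forces one ``group of three'' into the construction and makes pairs of consecutive forcing pendants the efficient way to fill out the rest of $B$.

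The construction I would use is $B=\{p_1,p_2,p_3\}$ together with $\lceil m/2\rceil-3$ further forcing pendants placed around the cycle. When $\lceil m/2\rceil-3$ is even (equivalently $m\equiv 1,2\pmod 4$) these are $\tfrac12(\lceil m/2\rceil-3)$ disjoint consecutive pairs at $\{p_6,p_7\},\{p_{10},p_{11}\},\dots$; when $\lceil m/2\rceil-3$ is odd ($m\equiv 0,3\pmod 4$) one uses as many such pairs as fit plus a single extra forcing pendant $p_{m-1}$. The point of the placement is that, once $v_0$ is blue, the group $\{p_1,p_2,p_3\}$ makes the contiguous blue arc of spokes reach $\{v_m,v_1,v_2,v_3,v_4\}$ (five spokes), each pair $\{p_a,p_{a+1}\}$ makes $\{v_{a-1},v_a,v_{a+1},v_{a+2}\}$ blue (four spokes), the extra pendant $p_{m-1}$ contributes $\{v_{m-1},v_{m-2}\}$ once $v_m$ is blue, and one checks residue class by residue class that these arcs tile the spoke cycle exactly when $m$ is odd and leave exactly one uncovered spoke when $m$ is even (that spoke later being absorbed by $v_0$). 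The small cases $m=5,6,7,8$, which have no pairs, serve as the base of this pattern.

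The relaxed chronology $\mathcal F$ then runs as follows: on time-step $1$ every forcing pendant forces its spoke; on time-step $2$ we take only the force $v_2\rightarrow v_0$, which is legal and in fact the only legal force available, since every other currently-blue spoke still has $v_0$ together with a second white neighbor; on time-step $3$, now that $v_0$ is blue, we take $v_1\rightarrow v_m$, $v_3\rightarrow v_4$, and $v_a\rightarrow v_{a-1}$, $v_{a+1}\rightarrow v_{a+2}$ for each pair $\{p_a,p_{a+1}\}$. If the extra pendant $p_{m-1}$ is present, then on time-step $4$ it fires $v_{m-1}\rightarrow v_{m-2}$, which is legal precisely because $v_m$ turned blue on step $3$. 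The endgame is then: once all spokes are blue, every spoke forces its terminal pendant on the next step; and if one spoke is still white at that point, $v_0$ forces it first and one further step finishes the terminal pendants. Tracking exactly when the last spoke and the last pendants turn blue gives completion time $4$ for $m\equiv1$, $5$ for $m\equiv2$, $5$ for $m\equiv3$, and $6$ for $m\equiv0\pmod 4$, matching the claimed bounds.

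The step I expect to be the main obstacle is the endgame bookkeeping, carried out separately in each residue class: one must keep precise track of which spokes and which pendants are blue at the end of each time-step, since a spoke adjacent to the lone remaining white spoke cannot yet force its own pendant, and since the extra pendant genuinely costs a time-step (it cannot activate until the group's arc reaches $v_m$). The two places where the white-neighbor counts must be done with the most care are verifying that on time-step $2$ no force other than $v_2\rightarrow v_0$ is available, and verifying that when $v_0$ performs its clean-up force exactly one spoke is white at that moment.
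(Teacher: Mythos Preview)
Your proposal is correct and follows essentially the same approach as the paper: the zero forcing set consists of the triple $\{p_1,p_2,p_3\}$ together with consecutive pairs $\{p_{6+4j},p_{7+4j}\}$ and, in the odd-parity cases, one additional isolated pendant; the relaxed chronology proceeds by pendants forcing spokes, then $v_2\rightarrow v_0$, then spreading along the spoke cycle, with $v_0$ and the isolated pendant cleaning up the remaining spokes before the terminal pendants are forced. The only cosmetic difference is that for $m\equiv 0\pmod 4$ the paper places the isolated pendant at $p_{m-3}$ rather than your $p_{m-1}$, but by the reflection symmetry of the cycle the two choices are equivalent and yield the same completion time.
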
 

\begin{figure}[h]
\begin{center}
\includegraphics[width=1.005\linewidth]{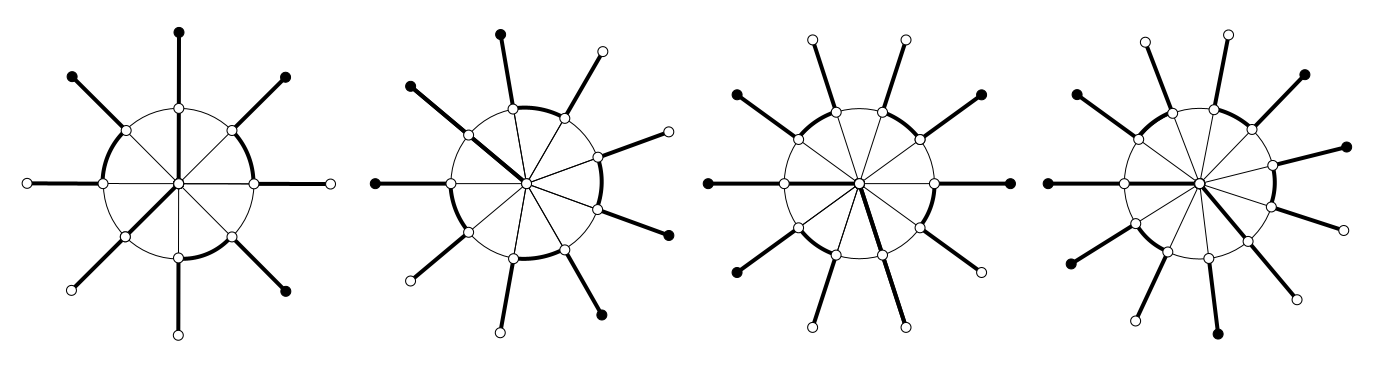}
  \caption{Zero forcing sets and chain sets of helm graphs.}
  \label{Helmchain}
  \end{center}
\end{figure}

\setcounter{case}{0}

\begin{proof}
	Let $m \geq 5$. Due to the nature of the helm graph, the propagation times are dependent on the congruence class $ m \bmod 4$, so we consider the following cases:
	\begin{case} $m \bmod 4 \equiv 0$
	\end{case}
		Let $B = \{ p_i \}_{i=1}^3 \cup \{p_{6+4j}\}_{j=0}^{\frac{m-12}{4}} \cup \{p_{7+4j}\}_{j=0}^{\frac{m-12}{4}} \cup \{p_{m-3}\} $. Note since $m$ is even, $\left\lceil \frac{m}{2} \right\rceil = \frac{m}{2}$ and 
  
  \[\abs{B}= 3 + 2\left( \frac{m-12}{4}+1\right ) +1 = \frac{m-12}{2} + 6 = \frac{m-12}{2} + \frac{12}{2} = \frac{m}{2}.\]
  
  Therefore, we know that if $B$ is a zero forcing set, then it is of minimum size. 
 We will now construct a relaxed chronology of forces $\mathcal F=\{F^{(k)}\}_{k=1}^6$. On time-step 1, the pendant vertices in $B$ perform forces as follows: $\{p_i\}_{i=1}^3 \rightarrow \{v_i\}_{i=1}^3$, $\{p_{6+4j}\}_{j=0}^{\frac{m-12}{4}} \rightarrow \{v_{6+4j}\}_{j=0}^{\frac{m-12}{4}}$, $\{p_{7+4j}\}_{j=0}^{\frac{m-12}{4}} \rightarrow \{v_{7+4j}\}_{j=0}^{\frac{m-12}{4}}$, and $p_{m-3} \rightarrow v_{m-3}$. On time-step 2, $v_2 \rightarrow v_0$ since $v_0$ was its only white neighbor ($p_2 \in B$ and $v_1,v_3$ were forced on time-step 1). On time-step 3, $v_1 \rightarrow v_m$ since $v_2, p_1,$ and  $v_0$ have been forced and are the only neighbors of $v_1$ besides $v_m$. Similarly, on time-step 3, $v_3 \rightarrow v_4$, and continuing in this manner, $\{v_{6+4j}\}_{j=0}^{\frac{m-12}{4}}\rightarrow \{v_{5+4j}\}_{j=0}^{\frac{m-12}{4}}$ since the only neighbors of $\{v_{6+4j}\}_{j=0}^{\frac{m-12}{4}}$ are $\{p_{6+4j}\}_{j=0}^{\frac{m-12}{4}}$, $\{v_{7+4j}\}_{j=0}^{\frac{m-12}{4}}$, and $v_0$ all of which are in $B$ or have been forced already. In addition $\{v_{7+4j}\}_{j=0}^{\frac{m-12}{4}}\rightarrow \{v_{8+4j}\}_{j=0}^{\frac{m-12}{4}}$ since the only neighbors of $\{v_{7+4j}\}_{j=0}^{\frac{m-12}{4}}$ are $\{p_{7+4j}\}_{j=0}^{\frac{m-12}{4}}$, $\{v_{6+4j}\}_{j=0}^{\frac{m-12}{4}}$, and $v_0$ all of which are in $B$ or have been forced already. At this point, all vertices along the cycle of spoke vertices have been forced except for $v_{m-2}$ and $v_{m-1}$. On time-step 4, $v_{m-3} \rightarrow v_{m-2}$ since its only other neighbors are $p_{m-3}$ (which was in $B$), $v_{m-4}$ (forced on time-step 3 by $v_{m-5}$), and $v_0$ (forced on time-step 2). Now all spoke vertices have been forced except for $v_{m-1}$ which can be forced on time-step 5 by $v_0$ (since all but one of $v_0$'s neighbors are blue). Finally, the only white vertices remaining are pendant vertices. Therefore, on time-step 6, their corresponding spoke vertices may force them blue, thus completing the forcing.

	\begin{case} $m \bmod 4 \equiv 1$
	\end{case}
		 Let $B = \{ p_i \}_{i=1}^3 \cup \{p_{6+4j}\}_{j=0}^{\frac{m-9}{4}} \cup \{p_{7+4j}\}_{j=0}^{\frac{m-9}{4}} $. Note since $m$ is odd, $\left\lceil \frac{m}{2} \right\rceil = \frac{m+1}{2}$ and
   
		 \[\abs{B}= 3 + 2\left( \frac{m-9}{4}+1\right) = \frac{m-9}{2} + 5 = \frac{m-9}{2} + \frac{10}{2} = \frac{m+1}{2}.\]
   
		We force in a manner similar to Case 1. On the first time-step $\{ p_i\}_{i=1}^3 \rightarrow \{v_i\}_{i=1}^3$, $\{p_{6+4j}\}_{j=0}^{\frac{m-9}{4}} \rightarrow \{v_{6+4j}\}_{j=0}^{\frac{m-9}{4}}$, $\{p_{7+4j}\}_{j=0}^{\frac{m-9}{4}} \rightarrow \{v_{7+4j}\}_{j=0}^{\frac{m-9}{4}}$. On time-step 2, $v_2 \rightarrow v_0$. On time-step 3, $v_1 \rightarrow v_m$, $v_3 \rightarrow v_4$, $\{v_{6+4j}\}_{j=0}^{\frac{m-9}{4}} \rightarrow \{v_{5+4j}\}_{j=0}^{\frac{m-9}{4}}$, and $\{v_{7+4j}\}_{j=0}^{\frac{m-9}{4}} \rightarrow \{v_{8+4j}\}_{j=0}^{\frac{m-9}{4}}$. At this point, all vertices in the graph have been forced except $\{p_{5+4j}\}_{j=0}^{\frac{m-9}{4}}$, $\{p_{8+4j}\}_{j=0}^{\frac{m-9}{4}}$, $p_4$, and $p_m$. These can all be forced on time-step 4 since they are the only white neighbors of their respective spoke vertices, $\{v_{5+4j}\}_{j=0}^{\frac{m-9}{4}}$, $\{v_{8+4j}\}_{j=0}^{\frac{m-9}{4}}, v_4$ and $v_m$.

	\begin{case} $m \bmod 4 \equiv 2$ 
	\end{case}
			Let $B = \{ p_i \}_{i=1}^3 \cup \{p_{6+4j}\}_{j=0}^{\frac{m-10}{4}} \cup \{p_{7+4j}\}_{j=0}^{\frac{m-10}{4}} $. Note since $m$ is even, $\left\lceil \frac{m}{2} \right\rceil = \frac{m}{2}$ and
   
   \[\abs{B}= 3 + 2\left( \frac{m-10}{4}+1\right) = \frac{m-10}{2} + 5 = \frac{m-10}{2} + \frac{10}{2} = \frac{m}{2}.\]
   
   The first three time-steps are again similar to Case 1. After time-step 3, the only white spoke vertex is $v_{m-1}$, which can be forced on time-step 4 by $v_0$. On time-step 5 the only remaining white vertices are pendant vertices which can be forced by their respective spoke vertices.
		
	\begin{case} $m \bmod 4 \equiv 3$
	\end{case}
	    Let $B= \{ p_i \}_{i=1}^3 \cup \{p_{6+4j}\}_{j=0}^{\frac{m-11}{4}} \cup \{p_{7+4j}\}_{j=0}^{\frac{m-11}{4}} \cup \{p_{m-1}\} $. Note since $m$ is odd, $\left\lceil \frac{m}{2} \right\rceil = \frac{m+1}{2}$ and 
     
     \[\abs{B}= 3 + 2\left( \frac{m-11}{4}+1\right ) +1 = \frac{m-11}{2} + 6 = \frac{m-11}{2} + \frac{12}{2} = \frac{m+1}{2}.\]
     
     Again, the first three time-steps are similar. At this point, all vertices along the cycle of spoke vertices have been forced except for $v_{m-2}$, which can be forced on time-step 4 by $v_0$. Finally, the only white vertices remaining are pendant vertices. Therefore, on time-step 5, their corresponding spoke vertices may force them blue, thus completing the forcing. 
	    
	    In each case, we have constructed a zero forcing set $B$ and a corresponding relaxed chronology of forces $\mathcal F$, thus showing
     \begin{equation*} \Z(H_m) \leq \Big{\lceil} \frac{m}{2} \Big{\rceil} \text{~and~} \pt(H_m) \leq \ct(\mathcal F) \leq \begin{cases} 
	6 & \text{if }m \bmod 4 \equiv 0\\
	4 &  \text{if }m \bmod 4 \equiv 1\\
	5 &  \text{if }m \bmod 4 \equiv 2 \text{ or } m \bmod 4 \equiv 3.
	\end{cases}
 \end{equation*}
	
	\noindent Furthermore, combining this with Lemma \ref{helmlem}, we have shown $\Z(H_m) = \left\lceil \frac{m}{2} \right\rceil$.
\end{proof}

Our next goal will be to determine lower bounds for $\pt(H_m)$, thus establishing the propagation time.  However, we must first provide some preliminary results to be used in said theorem.

\begin{observation}\label{Pendant_Vertex}
Let $B$ be a minimum zero forcing set of $H_m$, $\mathcal F$ be a relaxed chronology of forces of $B$ on $H_m$, and $\mathcal C$ be the chain set induced by $\mathcal F$.  If $m$ is even, then each forcing chain in $\mathcal C$ will contain two pendant vertices. If $m$ is odd, then all but one forcing chain in $\mathcal C$ will contain two pendant vertices with the remaining forcing chain containing one pendant vertex.
\end{observation}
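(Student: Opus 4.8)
The plan is to prove this by a counting argument on the $m$ pendant vertices of $H_m$, using two facts recalled in the preliminaries: the chain set $\mathcal C$ induced by $\mathcal F$ forms a path cover of $H_m$ with exactly $|B|$ forcing chains, and, since $B$ is a minimum zero forcing set, $|B| = \Z(H_m) = \lceil m/2 \rceil$ by Theorem \ref{upper_helm}.

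First I would record the count of chains. Each vertex of $B$ is the initial vertex of exactly one forcing chain in $\mathcal C$ — a vertex of $B$ that performs no force yields the trivial chain consisting of that vertex alone — and distinct forcing chains have distinct initial vertices since each vertex performs at most one force; hence $|\mathcal C| = |B| = \lceil m/2 \rceil$. Next comes the structural point: every forcing chain of $\mathcal C$ contains at most two pendant vertices. Indeed, a pendant vertex $p_i$ satisfies $\deg(p_i) = 1$ in $H_m$, with unique neighbor $v_i$; so if $p_i$ occurred as an interior vertex $v_j$ (with $0 < j < N$) of a forcing chain $(v_0, v_1, \dots, v_N)$, then $p_i$ would be adjacent in $H_m$ to the two distinct vertices $v_{j-1}$ and $v_{j+1}$, contradicting $\deg(p_i) = 1$. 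Thus $p_i$ must occur as an endpoint of its chain, and a path has at most two endpoints.

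Finally I would run the count. Since $\mathcal C$ is a path cover, each of the $m$ pendant vertices of $H_m$ lies in exactly one forcing chain. If $m$ is even there are $m/2$ chains, and distributing $m$ pendants with at most two per chain forces every chain to contain exactly two. If $m$ is odd there are $(m+1)/2$ chains; writing $a$, $b$, $c$ for the numbers of chains containing $0$, $1$, $2$ pendant vertices respectively, we have $a + b + c = (m+1)/2$ and $b + 2c = m$, so $2a + b = 1$, which forces $a = 0$, $b = 1$, $c = (m-1)/2$ — precisely the asserted distribution. I do not expect a genuine obstacle here, as the argument is essentially a pigeonhole count; the only steps requiring a little care are the justification that no chain carries three or more pendants and the bookkeeping that pins down the odd case uniquely, both handled above.
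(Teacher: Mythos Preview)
Your proposal is correct and follows essentially the same counting argument as the paper: pendant vertices, having degree one, can only occupy endpoints of a forcing chain, so at most two per chain; combined with $|\mathcal C| = \lceil m/2\rceil$ and the fact that the $m$ pendants are partitioned among the chains, pigeonhole forces the stated distribution. Your version is simply more explicit---you spell out why a pendant cannot be interior and solve the linear system in the odd case---whereas the paper compresses this to a couple of sentences.
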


We know this observation to be true since a forcing chain, being a vertex induced path, can have at most two pendant vertices of $H_m$. If $m$ is even, there will be $\frac{m}{2}$ chains. Since there are $m$ pendant vertices, each forcing chain must contain two pendant vertices. If $m$ is odd, there will be $\frac{m+1}{2}$ chains. Since there are $m$ pendant vertices, all but one chain must contain two pendant vertices, the remaining chain will have exactly one pendant vertex.

\begin{lemma}\label{Three_Consecutive}
If $m$ is even, then there can not be two distinct sets of three consecutive pendant vertices in a minimum zero forcing set of $H_m$. However, any minimum zero forcing set must contain a group of $3$ forcing pendants. 
\end{lemma}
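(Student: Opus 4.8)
The plan is to extract, from an arbitrary relaxed chronology of $B$, a rigid description of the induced chain set and then run a short ``deadlock'' argument about the center vertex $v_0$. Assume $m\ge 6$ is even (the case $m=4$ being subsumed by the Remark), so that $\Z(H_m)=\frac m2$ by Theorem~\ref{upper_helm}. Fix a minimum zero forcing set $B$, an arbitrary relaxed chronology $\mathcal F$ of $B$ on $H_m$, and let $\mathcal C$ be the induced chain set. Since no vertex of $B$ is ever forced and each vertex forces at most one vertex, $\mathcal C$ is a path cover consisting of exactly $|B|=\frac m2$ forcing chains, the source of each lying in $B$.

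First I would pin down the shape of $\mathcal C$. By Observation~\ref{Pendant_Vertex} each chain contains exactly two of the $m$ pendants, and since pendants have degree $1$ they must be its two endpoints; hence every chain has the form $p_i-v_i-\cdots-v_j-p_j$ with interior vertices drawn from the $m$ spokes together with $v_0$. There are $m+1$ interior vertices in all, every chain has at least two, and there are $\frac m2$ chains, so exactly one chain (call it the long chain) has three interior vertices and every other is a ``domino'' $p_i-v_i-v_{i+1}-p_{i+1}$. The two interior spokes of a domino are never $v_0$, so $v_0$ lies in the long chain, and inducedness forces that chain to be $p_a-v_a-v_0-v_b-p_b$ with $v_av_b\notin E(H_m)$. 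Thus $B$ is a set of $\frac m2$ pendants: one of $p_a,p_b$ and exactly one pendant of each domino pair $\{p_i,p_{i+1}\}$; in particular the pendants outside $\{p_a,p_b\}$ are partitioned into adjacent pairs, each meeting $B$ in a single vertex. Relabel so that $p_a\in B$ and $p_b\notin B$.

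The crux is that in $\mathcal F$ the vertex $v_0$ is forced by $v_a$, and this force requires $v_{a-1}$ and $v_{a+1}$ to already be blue (its remaining neighbor $p_a$ being blue from the start); in particular $v_{a-1}$ turns blue strictly before $v_0$. Now the chain of $\mathcal C$ containing $v_{a-1}$ is a domino whose two interior spokes must be $v_{a-1}$ and one of its spoke-neighbors; since $v_a$ lies in the long chain, that domino is on $\{v_{a-2},v_{a-1}\}$. If its source were $p_{a-2}$, then $v_{a-1}$ would be forced by $v_{a-2}$, which requires $v_0$ to be blue first --- impossible. Hence the source is $p_{a-1}$, so $p_{a-1}\in B$ and $p_{a-2}\notin B$; the mirror-image argument at $v_{a+1}$ gives $p_{a+1}\in B$ and $p_{a+2}\notin B$. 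Therefore $\{p_{a-1},p_a,p_{a+1}\}\subseteq B$ while $p_{a-2},p_{a+2}\notin B$, so $\{p_{a-1},p_a,p_{a+1}\}$ is a group of exactly three forcing pendants, which proves the second assertion.

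For the first assertion I would show this triple is the only set of three consecutive pendants inside $B$. Suppose $\{p_j,p_{j+1},p_{j+2}\}\subseteq B$. If it avoided $\{p_a,p_b\}$ it would lie entirely in one of the two arcs into which $p_a$ and $p_b$ split the pendant cycle, and the unique domino tiling of that arc would place two of the three pendants in a common pair, contradicting that both lie in $B$. So the triple meets $\{p_a,p_b\}$, hence contains $p_a$ (since $p_b\notin B$), forcing $j\in\{a-2,a-1,a\}$; and since $p_{a-2},p_{a+2}\notin B$, the only possibility is $j=a-1$. Thus there is a unique such triple, so certainly no two distinct ones. I expect the main obstacle to be making the temporal ``deadlock'' step fully rigorous --- carefully tracking that in $\mathcal F$ the force $v_a\to v_0$ occurs while $v_0$ is still white but $v_{a-1}$ is already blue, and that a hypothetical force $v_{a-2}\to v_{a-1}$ would have required $v_0$ blue beforehand --- since the surrounding combinatorics is routine once the chain-set structure is established.
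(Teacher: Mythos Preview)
Your proof is correct, but it takes a genuinely different route from the paper's.

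The paper argues both halves directly and very briefly. For the first assertion it supposes two disjoint triples $\{p_{m(j,1)},p_{m(j,2)},p_{m(j,3)}\}$, $j=1,2$; after each pendant forces its spoke, each middle spoke $v_{m(j,2)}$ has $v_0$ as its unique remaining white neighbor, so both of the chains through $p_{m(1,2)}$ and $p_{m(2,2)}$ would have to pass through $v_0$, contradicting Observation~\ref{Pendant_Vertex}. For the second assertion it supposes no triple exists; then after the pendants force their spokes every blue spoke still has $v_0$ and at least one spoke neighbor white, so forcing stalls and $B$ is not a zero forcing set.

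Your argument instead first extracts the complete shape of the chain set: one ``long chain'' $p_a\,v_a\,v_0\,v_b\,p_b$ and $\frac m2-1$ dominoes $p_i\,v_i\,v_{i+1}\,p_{i+1}$. You then run a temporal deadlock at $v_0$ to force $p_{a-1},p_{a+1}\in B$ (and $p_{a\pm 2}\notin B$), and finally use the unique domino tiling of each arc between $p_a$ and $p_b$ to show that no triple can live away from $p_a$. This is heavier machinery, but it buys you more: you actually prove the sharper statement that the triple is \emph{unique} and is exactly a maximal group of three, whereas the paper only obtains ``at least one triple'' and ``no two distinct triples'' (the group-of-three conclusion then follows by combining the two). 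Your structural description of $\mathcal C$ is also precisely what is implicitly reused later in the $m\equiv 0\pmod 4$ propagation-time argument, so it is not wasted effort.

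One small point to make explicit when you write it up: the fact that forcing chains are \emph{induced} paths (so that the long chain cannot be $p_a\,v_a\,v_{a+1}\,v_0\,p_?$ or have $v_av_b\in E$) is used but not stated; it follows because when $u\to w$ every later vertex of the chain is still white, so $u$ can have no chain vertex beyond $w$ as a neighbor. Similarly, the cases $b=a\pm 2$ are silently excluded by your domino analysis (they would leave $v_{a\mp 1}$ with no available partner), which is fine but worth a clause.
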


\begin{figure}[h]
    \centering
    \includegraphics[width=.4\linewidth]{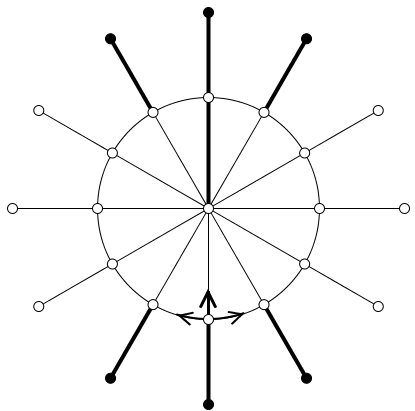}
    \caption{An example of $H_{12}$ with two sets of three consecutive forcing pendants.}
    \label{Lem3,7}
\end{figure}

\begin{proof}
Suppose by way of contradiction that $B$ is a minimum zero forcing set of $H_m$ that contains two sets of three consecutive pendant vertices, $\{p_{m(1,i)}\}_{i=1}^3$ and $\{p_{m(2,i)}\}_{i=1}^3$, with $m(j,i)+1=m(j,i+1)$. Every vertex in $B$ must force in order for its forcing chain to contain two pendant vertices. In particular, each such pendant vertex can only force its neighboring spoke vertex, so for each pair $(j,i)\in\{1,2\} \times\{1,2,3\}$, $p_{m(j,i)}\rightarrow v_{m(j,i)}$. Since $v_{m(1,2)}$ and $v_{m(2,2)}$ each only have one remaining white neighbor, that being the center vertex, and the center vertex can only be a member of one forcing chain, either the forcing chain beginning at $p_{m(1,2)}$ or the forcing chain beginning at $p_{m(2,2)}$ contain a single pendant vertex which contradicts Observation \ref{Pendant_Vertex}. 

Now suppose $B$ is a minimum zero forcing set of $H_m$ but $B$ does not contain three consecutive pendant vertices. After each pendant vertex in $B$ forces its neighboring spoke vertex, each such spoke vertex will have at least one white neighbor on the cycle of spoke vertices. Since each of the spoke vertices are adjacent to the center vertex and the center vertex has not been forced, these vertices have two white neighbors and thus no further forcing can occur.
\end{proof}

\begin{lemma}\label{Three_Consecutive_Odd}
If $m$ is odd, then there cannot be three sets of three consecutive pendant vertices in a minimum zero forcing set of $H_m$. In addition, if every vertex in a minimum zero forcing set $B$ of $H_m$ is a pendant vertex, then $B$ must contain three consecutive pendant vertices. 
\end{lemma}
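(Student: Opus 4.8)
The plan is to follow the template of the proof of Lemma~\ref{Three_Consecutive}, adjusted for the fact that, by Observation~\ref{Pendant_Vertex}, a chain set of a minimum zero forcing set of $H_m$ with $m$ odd has \emph{exactly one} forcing chain containing a single pendant vertex rather than none. I would dispatch the second statement first. Suppose $B$ is a minimum zero forcing set consisting only of pendant vertices and containing no three consecutive pendant vertices, and let $\mathcal F$ be a relaxed chronology of $B$ on $H_m$. Since every vertex of $B$ is a pendant, on time-step $1$ each $p_i\in B$ can only force $v_i$, so after time-step $1$ the blue set is exactly $B\cup\{v_i:p_i\in B\}$. The center $v_0$ still has at least two white neighbors, because $|B|=\lceil m/2\rceil<m-1$; and a blue spoke $v_i$ has its white neighbors among $\{v_0,v_{i-1},v_{i+1}\}$ with $v_0$ always white, so $v_i$ has a unique white neighbor only when $p_{i-1},p_i,p_{i+1}\in B$, which is excluded. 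Hence no force is possible on time-step $2$, contradicting that $B$ is a zero forcing set.

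For the first statement, suppose for contradiction that a minimum zero forcing set $B$ contains three distinct sets $T_1,T_2,T_3$ of three consecutive pendant vertices, fix a relaxed chronology $\mathcal F$, and let $\mathcal C$ be its chain set. By Observation~\ref{Pendant_Vertex}, $\mathcal C$ contains exactly one single-pendant chain, so at most one forcing pendant $q$ fails to force its neighboring spoke (the chain of such a pendant is $(q)$). Call a triple $T=\{p_{i-1},p_i,p_{i+1}\}\subseteq B$ \emph{fully forcing} if $q\notin T$; then each of its three pendants forces its spoke, so after time-step $1$ the spokes $v_{i-1},v_i,v_{i+1}$ are blue and thereafter $v_0$ is the only possible white neighbor of $v_i$. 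Since $v_i$ is preceded on its chain by $p_i$, the middle spoke $v_i$ either is terminal --- making $(p_i,v_i)$ a single-pendant chain --- or forces $v_0$; as $v_0$ lies on one chain and has at most one predecessor there, at most one fully forcing triple can have its middle spoke force $v_0$, and distinct fully forcing triples have distinct middle pendants. Consequently the number of single-pendant chains is at least the number of fully forcing triples among $T_1,T_2,T_3$, minus one, plus a further one for $(q)$ if $q$ exists (and $(q)$ is distinct from the chains $(p_i,v_i)$ since $q$ is the middle of no fully forcing triple).

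If $q$ belongs to at most one of $T_1,T_2,T_3$ --- in particular if $q$ does not exist --- this lower bound is at least $2$, contradicting Observation~\ref{Pendant_Vertex}. So it remains to treat the case that $q=p_k$ lies in at least two of the three triples. The only sets of three consecutive pendants containing $p_k$ are $\{p_{k-2},p_{k-1},p_k\}$, $\{p_{k-1},p_k,p_{k+1}\}$, $\{p_k,p_{k+1},p_{k+2}\}$, so if two of them lie in $B$ their union is a run of at least four consecutive forcing pendants with $q$ not an endpoint; relabel so that $p_1,p_2,p_3,p_4\in B$ with $q\in\{p_2,p_3\}$, and by the reflective symmetry of $H_m$ assume $q=p_2$. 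Then $v_1,v_3,v_4$ are blue after time-step $1$ while $v_2$ is white, so $v_2$ is forced by one of $v_1$, $v_3$, or $v_0$ (the case $v_2\in B$ being disposed of separately, since then $v_2$ and $v_3$ both have only $v_0$ white, and whichever is left terminal yields either a zero-pendant chain or a second single-pendant chain). In each subcase I would follow the chain through the appropriate neighboring forcing pendant --- $p_1$ if $v_1\to v_2$, $p_3$ if $v_3\to v_2$, and $p_3$ again if $v_0\to v_2$, after observing that $v_0$ is blued before $v_2$ so $v_3$ can force nothing --- and conclude that this chain terminates at a spoke, producing a second single-pendant chain alongside $(q)$, which again contradicts Observation~\ref{Pendant_Vertex}.

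I expect this last case --- tracking precisely how the ``trapped'' spoke $v_q$ receives its color in a relaxed chronology --- to be the main obstacle; the remainder is bookkeeping parallel to Lemma~\ref{Three_Consecutive}.
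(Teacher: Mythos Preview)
Your treatment of the second claim matches the paper's. For the first claim, the paper follows the same template but is terser: it asserts that, since at most one chain beginning at a pendant $p_{m(j,i)}$ can fail to contain $v_{m(j,i)}$, at least two of the three middle spokes $v_{m(j,2)}$ have the center vertex as their only route to a further pendant, and since $v_0$ lies on a single chain this already yields two single-pendant chains --- contradicting Observation~\ref{Pendant_Vertex}. The paper does not break out the case you isolate, in which the unique non-forcing pendant $q$ sits in two overlapping triples; its count of ``two'' is not transparently justified there, and your subcase analysis (tracking which of $v_1$, $v_3$, $v_0$ forces the orphaned spoke $v_2$) is a legitimate way to close that gap. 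One simplification: your parenthetical case $v_2\in B$ is vacuous, since once $q$ exists the chain $(q)$ is already the unique single-pendant chain, so no non-pendant vertex can be initial in $B$. Overall this is the same approach as the paper's, carried out with more explicit bookkeeping.
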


\begin{figure}[h]
    \centering
    \includegraphics[width=.4\linewidth]{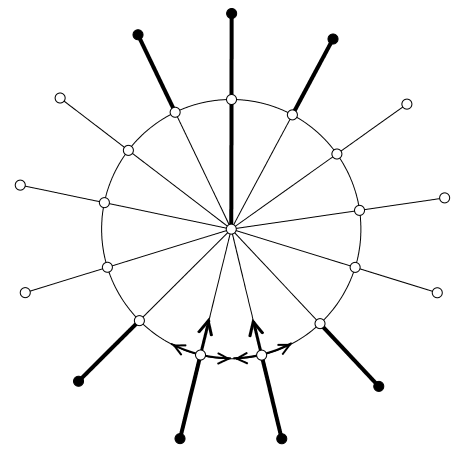}
    \caption{An example of $H_{13}$ with three sets of three consecutive forcing pendants.}
    \label{Lem3,8}
\end{figure}

\begin{proof}
Suppose by way of contradiction that we have a minimum zero forcing set $B$ that contains three sets of three consecutive pendant vertices, $\{p_{m(1,i)}\}_{i=1}^3$, $\{p_{m(2,i)}\}_{i=1}^3$, and $\{p_{m(3,i)}\}_{i=1}^3$, with $m(j,i)+1=m(j,i+1)$. Due to Observation \ref{Pendant_Vertex}, all but one vertex in $B$ has to force, otherwise it can not be the case that all but one of their forcing chains contain two pendant vertices. In particular, each of those pendant vertices can force at most their neighboring spoke vertex, so for each pair $(j,i)\in\{1,2,3\} \times\{1,2,3\}$ where forcing occurs, $p_{m(j,i)}\rightarrow v_{m(j,i)}$. Then at most one forcing chain beginning at a pendant vertex $p_{m(j,i)})$ will not contain the neighboring vertex $v_{m(j,i)}$. Due to this, for at least two of the $v_{m(j,2)}$, every path of white vertices beginning at a neighbor of $v_{m(j,2)}$ and terminating at a pendant vertex will pass through the center vertex. However, the center vertex can only be a member of one forcing chain, leaving a total of two forcing chains each of which only contain a single pendant vertex, thus contradicting Observation \ref{Pendant_Vertex}. 

Suppose now we have a minimum zero forcing set $B$ in which every vertex is a pendant vertex, but $B$ does not contain a set of three consecutive pendant vertices. As in Lemma \ref{Three_Consecutive}, the forcing process cannot go further than the spoke vertices neighboring the members of $B$, since each of the spoke vertices will have two white neighbors, specifically the center vertex and a spoke vertex.
\end{proof}

\setcounter{case}{0}

\begin{theorem}
For $m \geq 5$
\begin{equation*}
    \pt(H_m) = 
    \begin{cases}
    6 \indent & \normalfont{\text{if }}m \bmod 4 \equiv 0 \\
    4 & \normalfont{\text{if }}m \bmod 4 \equiv 1 \\
    5 & \normalfont{\text{if }}m \bmod 4 \equiv 2 \\ 
    5 & \normalfont{\text{if }}m \bmod 4 \equiv 3. \\
    \end{cases}
\end{equation*}
\end{theorem}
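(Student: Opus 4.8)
The upper bounds are already in hand from Theorem \ref{upper_helm}, so the plan is to prove the matching lower bounds; recall that $\pt(H_m)$ equals the minimum of $\ct(\mathcal F)$ over minimum zero forcing sets $B$ and relaxed chronologies $\mathcal F$ of $B$, so it suffices to show that for \emph{every} minimum zero forcing set $B$ and every relaxed chronology $\mathcal F$ of $B$, $\ct(\mathcal F)$ is at least the claimed value. I would fix such a $B$ and $\mathcal F$, pass to the induced chain set $\mathcal C$, and first use minimality to discard wasteful configurations (e.g.\ no spoke $v_i$ has both $v_i,p_i\in B$); the case $v_0\in B$ I would dispatch by a short separate argument showing it yields no faster process for $m\ge 5$. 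In the main case $v_0\notin B$ the structure of the process is heavily constrained: (i) on time-step $1$ the only possible forces are $p_i\to v_i$ for forcing pendants $p_i$, since every spoke is adjacent to the still-white $v_0$ and hence cannot have a unique white neighbor; (ii) on time-step $2$ the only vertex that can be forced is $v_0$, and for it to be forced at all one needs a spoke whose three non-$v_0$ neighbors are blue after time-step $1$ — by Lemmas \ref{Three_Consecutive} and \ref{Three_Consecutive_Odd} such a configuration exists and essentially forces a group of three forcing pendants; (iii) after $v_0$ is blue, a spoke $v_i$ can force along the cycle only when $p_i$ is already blue, i.e.\ only if $p_i\in B$, so the propagation highway along the rim runs only through forcing-pendant spokes while every other spoke is a dead end committed to forcing its own terminal pendant; and (iv) the last rim vertex to be forced is forced by $v_0$, which can only happen once all other spokes are blue.

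From (i)--(iv) the propagation splits into phases: time-step $1$ activates the forcing-pendant spokes, time-step $2$ activates $v_0$, from time-step $3$ onward the runs of "non-highway" spokes are filled from their ends by neighboring forcing-pendant spokes (with exactly one run getting an extra feed from $v_0$), and each terminal pendant is forced one step after all the rim-neighbors of its spoke are blue. I would then locate the bottleneck vertex: a terminal pendant in the longest run of non-highway spokes, or the terminal pendant of the last spoke forced by $v_0$. Counting gives $\lfloor m/2\rfloor$ spokes carrying terminal pendants against at most $\lceil m/2\rceil$ forcing-pendant spokes, and Lemmas \ref{Three_Consecutive}, \ref{Three_Consecutive_Odd} bound how these can be arranged around the rim (at most one group of three forcing pendants when $m$ is even, at most two when $m$ is odd). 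Combining the arrangement constraints with the parity of $m$, I would show that in every minimum zero forcing set some run forces its bottleneck vertex no earlier than the stated time, the precise threshold ($6,4,5,5$ according to $m\bmod 4$) emerging from how the two feeding ends and the single extra feed from $v_0$ interact with the parities of the run lengths.

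The main obstacle is turning the bookkeeping in (iii)--(iv) into a tight bound: it is not enough that a long run of non-highway spokes exists — one must verify that filling that run, and then forcing the pendants on it, genuinely cannot finish before the stated time. This requires tracking that a forcing-pendant spoke acts as a one-directional relay once fed from one side, that an isolated forcing pendant merely passes the signal through rather than accelerating it, and that exactly one spoke in the whole graph enjoys being forced directly by $v_0$. The casework is delicate precisely because the answer depends on $m\bmod 4$: for $m\equiv 1$ the bound $4$ follows almost immediately from (i)--(ii) together with the existence of at least one terminal pendant, whereas for $m\equiv 0$ the bound $6$ needs the full relay analysis to see that some pendant cannot be blue before time-step $6$. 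A secondary, more routine obstacle is absorbing the degenerate possibilities — $v_0\in B$, spokes lying in $B$, and extremal gap patterns — which I would handle with short arguments again invoking Observation \ref{Pendant_Vertex} and Theorem \ref{term}.
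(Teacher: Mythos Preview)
Your plan is correct and follows essentially the same route as the paper: both arguments take the upper bounds from Theorem~\ref{upper_helm}, reduce (via Observation~\ref{Pendant_Vertex} and Theorem~\ref{term}) to zero forcing sets consisting of pendants, use Lemmas~\ref{Three_Consecutive} and~\ref{Three_Consecutive_Odd} to constrain the pattern of forcing-pendant groups, and then track the forced phase structure (pendants $\to$ spokes on step~1, only $v_0$ on step~2, rim-filling from step~3 with $v_0$ forcing last, terminal pendants one step later) through a case split on $m\bmod 4$. Your ``highway/relay/bottleneck'' language is a repackaging of the paper's step-by-step analysis, and the delicate point you flag---that an isolated forcing pendant (or, in the $m\equiv 3$ case, a long run of terminal pendants) creates a spoke that cannot be cleared by step~3---is exactly the mechanism the paper isolates in Cases~1 and~4.
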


\begin{proof} Let $B$ be an arbitrary efficient zero forcing set of $H_m$, $\mathcal F=\{F^{(k)}\}_{k=1}^{\pt(H_m)}$ be a propagating family of forces of $B$ on $H_m$, and $\mathcal C$ be the chain set induced by $\mathcal F$.  We will prove this theorem by proving the subsequent four cases below.
\begin{case}$m \bmod 4 \equiv 0$
\end{case}
Since $m$ is even, by Observation \ref{Pendant_Vertex}, each forcing chain in $\mathcal C$ must start and end at a pendant vertex. Furthermore, by Lemma \ref{Three_Consecutive}, $B$ must contain a consecutive set of three pendant vertices but cannot contain a consecutive set of four pendant vertices (as it would then contain two distinct consecutive sets of three pendant vertices).  Furthermore, since $m \bmod 4 \equiv 0$, it follows that $\abs{B}=\frac{m}{2}$ is even and so $B$ must contain another group of forcing pendants of odd length. By Lemma \ref{Three_Consecutive}, this must be an isolated forcing pendant, $q$. 

During $F^{(1)}$, each pendant vertex in $B$ forces its only neighbor. During $F^{(2)}$, only the center vertex can be forced, since every spoke vertex is adjacent to the center vertex.

Let $u$ be the spoke vertex adjacent to $q$, and note that neither of the pendant vertices which are consecutive with $q$ are members of $B$, so there exist spoke vertices $w,w' \in V(G) \setminus E_{\mathcal F}^{[2]}$ such that $w,w' \in N_G(u)$.  Due to this, neither $u$ nor $v_0$ can force on the third time-step. Since $v_{0}$ and $u$ will need to force in order for their respective forcing chains to end at a pendant vertex, we know that after the third time-step there will be at least two white spoke vertices, say $w_u$ and $w_0$ (either of which may be equal to $w$ or $w'$) such that $u \rightarrow w_u$ and $v_0 \rightarrow w_0$ during $\mathcal F$. 

Furthermore, during the fourth time-step, $u$ might force $w_u$, but $v_{0}$ cannot yet force because it still has at least two white neighbors, specifically $w_u$ and $w_0$. During the fifth time-step, $v_{0}$ may force $w_0$, allowing any remaining white pendant vertices to be forced on the sixth time-step, completing $\mathcal F$.  Thus $\pt(H_{m})\geq6$, for $m \bmod 4 \equiv 0$.

\begin{case}
$m \bmod 4 \equiv 1$
\end{case}

By Observation \ref{Pendant_Vertex}, all but one forcing chain in $\mathcal C$ must contain two pendant vertices with the remaining forcing chain containing one pendant vertex. First suppose $B$ contains a vertex which is not a pendant vertex. By Theorem \ref{term}, we know that $\pt(G,\Term(\mathcal F)) \leq \pt(G, B)$. Furthermore by Observation \ref{Pendant_Vertex}, we know  that every vertex in $\Term(\mathcal F)$ will be a pendant vertex. Due to this, in determining a lower bound on $\pt(G)$, we can assume every vertex in our zero forcing set is a pendant vertex.

Now, suppose without loss of generality that every vertex in $B$ is a pendant vertex. During the first time-step, at most each pendant vertex in $B$ forces its only neighbor.  During the second time-step, at most the center vertex $v_0$ will be forced, since every spoke vertex is adjacent to $v_0$.

Now that $v_0$ is blue, forcing may occur along the cycle of spoke vertices on the third time-step. If the entire cycle of spoke vertices is now blue, then on the fourth time-step, the remaining pendant vertices may be forced, completing $\mathcal F$. Thus $\pt(H_m) \geq 4$, for $m \bmod 4 \equiv 1$.

\begin{case}
$m \bmod 4 \equiv 2$
\end{case}

As in Case 1, since $m$ is even, $\mathcal C$ will have $\frac{m}{2}$ forcing chains, each of which start and end at a pendant vertex. During the first time-step, at most each pendant vertex in $B$ forces its only neighbor. At most, during the second time-step, the center vertex will be forced, since every spoke vertex is adjacent to the center vertex.  Now that the center vertex has been forced, forcing may occur along the cycle of spoke vertices on the third time-step. However, since $m$ is even, every forcing chain in $\mathcal C$ must begin and end at a pendant vertex. Thus there must remain a spoke vertex for the center vertex to force which can not happen until the fourth time-step. Now that the entire cycle of spoke vertices is blue, at the fifth time-step the remaining pendant vertices may be forced, completing $\mathcal F$. Thus $\pt(H_m) \geq 5$, for $m \bmod 4 \equiv 2$.

\begin{case}
$m \bmod 4 \equiv 3$
\end{case}

As in Case 2, we can assume that every vertex in $B$ is a pendant vertex. So suppose, without loss of generality, that $B$ is an efficient zero forcing set such that every vertex in $B$ is a pendant vertex. During the first time-step, at most each pendant vertex in $B$ forces its only neighbor.  Since every spoke vertex is adjacent to the center vertex, during the second time-step, the center vertex $v_0$ will be the only vertex forced.  

By Lemma \ref{Three_Consecutive_Odd}, $B$ must contain at least one set of three consecutive forcing pendants.  Let $b_0$ be the vertex which forces $v_0$ during $\mathcal F$ and note that $b_0$ must be adjacent to the middle vertex of one such set of three consecutive forcing pendants.  Now that $v_0$ has been forced, forcing may occur along the cycle of spoke vertices on the third time-step. However, note that since $v_0$ cannot force until only a single white spoke vertex remains, $v_0$ cannot force until at least time-step $4$. 
 We will now prove that $V(G) \setminus E_{\mathcal F}^{[3]}$ contains spoke vertices.

Note that in this case, $m \bmod 4 \equiv 3$ so we have that $m = 4k + 3$ where $k$ is an integer. Since m is odd, we know that the size of the zero forcing set is $\frac{m+1}{2}$ which is equivalent to $2k + 2$. This shows that the size of the zero forcing set will always be even. In particular, because $m - (2k + 2) = 2k + 1$, we know that we have $2k + 2$ forcing pendants and $2k + 1$ terminal pendants.  We now consider two cases:

\vspace{-0.1in}

\begin{addmargin}[0.2in]{0in}

\begin{subcase}
$B$ contains an isolated forcing pendant $q$.
\end{subcase}

\vspace{0.1in}

Since $b_0$ was used to force $v_0$ during the second time-step, we are left with $2k + 1$ spoke vertices which can potentially be used to force during the third time-step. This means in order for $E_{\mathcal F}^{[3]}$ to contain every spoke vertex, every spoke vertex that is adjacent to a forcing pendant, except $b_0$, must force during time-step $3$.  However, the spoke vertex $u$ which is adjacent to the isolated forcing pendant $q$ will have two white neighbors $w$ and $w'$, and thus cannot force until after either $w$ or $w'$ become blue.  However, by Observation \ref{Pendant_Vertex} only one forcing chain in $\mathcal C$ can contain a single pendant vertex and thus either $v_0$ or $u$ must force during $\mathcal F$. Finally, since the only neighbors of $v_0$ and $u$ which are still white after the second time-step are spoke vertices, $V(G)\setminus E_{\mathcal F}^{[3]}$ contains spoke vertices.

\begin{subcase}
$B$ contains no isolated forcing pendant.
\end{subcase}

Note that in order for two groups of forcing pendants to be distinct, there must be a group of terminal pendants between them. Since the groups of pendant vertices are arranged along a cycle, there must be the same number of groups of forcing pendants as there are groups of terminal pendants. Because the size of the zero forcing set must be even, we know, by Lemma \ref{Three_Consecutive_Odd}, that we need two consecutive sets of three forcing pendants, and thus either a group of forcing pendants of size at least four or two groups of forcing pendants of size three. Since there are an equal number of groups of forcing pendants as there are groups of terminal pendants, there is one less terminal pendant than forcing pendant, and there are no isolated forcing pendants, there must be at least one group of at least three terminal pendants.

Let $w$ be the spoke vertex adjacent to the middle pendant vertex of a group of three terminal pendants. Since, prior to the third time-step, $v_0$ is the only blue neighbor of $w$ and it cannot force on the third time-step, $w \in V(G)\setminus E_{\mathcal F}^{[3]}$.
\end{addmargin}

\vspace{0.1in}

In both sub-cases, $V(G) \setminus E_{\mathcal F}^{[3]}$ must contain spoke vertices. If every spoke vertex is blue after the fourth time-step, then on the fifth time-step, the remaining pendant vertices may be forced, completing $\mathcal F$. Thus $\pt(H_m) \geq 5$, for $m \bmod 4 \equiv 3$.  Finally, by Theorem \ref{upper_helm},
$$ \pt(H_m) = \begin{cases} 
6 & \text{if }m \bmod 4 \equiv 0\\
4 &  \text{if }m \bmod 4 \equiv 1\\
5 &  \text{if }m \bmod 4 \equiv 2 \text{ or } m \bmod 4 \equiv 3.
\end{cases}$$
\end{proof}

We now provide the following theorem that identifies the zero forcing number and propagation time for our generalized helm graph for the case where $s>1$.

\begin{theorem}
	For $s>1$, $\Z\big{(}H(m,s)\big{)}= m(s-1)+ 1 $ and $\pt\big{(}H(m,s)\big{)} = 2.$
\end{theorem}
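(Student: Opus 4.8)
The plan is to bound $\Z\big{(}H(m,s)\big{)}$ from both sides and then read off $\pt\big{(}H(m,s)\big{)}$ from the same constructions (assuming as usual $m \ge 3$). The structural fact I would establish first is that every zero forcing set $B$ contains at least $s-1$ of the $s$ pendant vertices attached to each spoke vertex $v_i$: each $p_{i,j}$ has $v_i$ as its only neighbour, so a pendant not in $B$ must be forced, and only $v_i$ can do so; since a vertex performs at most one force during any relaxed chronology (this is exactly why a chain set forms a path cover, as noted before Theorem~\ref{Lemon}), $v_i$ rescues at most one of its pendants. Hence $\Z\big{(}H(m,s)\big{)} \ge m(s-1)$. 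To improve this to $m(s-1)+1$, I would suppose $\abs{B} = m(s-1)$; then $B$ must consist of exactly $s-1$ pendants at each spoke and nothing else, so after one time-step every spoke vertex is blue while each $v_i$ still has two white neighbours (the centre $v_0$ and its one remaining pendant), the process stalls, $v_0$ is never blued, and $B$ fails.

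For the matching upper bound, and simultaneously for $\pt\big{(}H(m,s)\big{)} \le 2$, I would take $B = \{v_0\} \cup \bigcup_{i=1}^{m}\{p_{i,j}\}_{j=1}^{s-1}$, which has cardinality $m(s-1)+1$: on time-step $1$ each chosen pendant forces its spoke vertex, turning the entire cycle of spokes blue, and on time-step $2$ each $v_i$ then has $p_{i,s}$ as its unique white neighbour and forces it, completing the process. Thus $B$ is a minimum zero forcing set with $\pt\big{(}H(m,s),B\big{)} = 2$, so $\Z\big{(}H(m,s)\big{)} = m(s-1)+1$ and $\pt\big{(}H(m,s)\big{)} \le 2$.

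It remains to prove $\pt\big{(}H(m,s)\big{)} \ge 2$. Let $B$ be an arbitrary minimum zero forcing set; by the structural fact it contains at least $m(s-1)$ pendants, hence at most one non-pendant vertex. If $v_0 \notin B$, then forcing $v_0$ on the first time-step would require some spoke $v_i$ to have $v_0$ as its only white neighbour, forcing $v_{i-1}, v_i, v_{i+1} \in B$ — three distinct non-pendant vertices when $m \ge 3$ — a contradiction, so $v_0$ is still white after one step. If $v_0 \in B$, then $v_0$ is the unique non-pendant of $B$, no spoke vertex lies in $B$, so every spoke is white at the start and none of the $m$ pendants outside $B$ can be forced during the first step. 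Either way a vertex survives the first step, so $\pt\big{(}H(m,s),B\big{)} \ge 2$ and therefore $\pt\big{(}H(m,s)\big{)} = 2$. The one step demanding a little care is verifying that a set of size $m(s-1)$ genuinely stalls at $v_0$; the rest reduces to routine degree and adjacency checks.
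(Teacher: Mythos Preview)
Your proposal is correct and follows essentially the same route as the paper: the pendant-counting claim, the stall argument for $\abs{B}=m(s-1)$, and the explicit zero forcing set $B=\{v_0\}\cup\{p_{i,j}\}_{i=1,\,j=1}^{m,\,s-1}$ with its two-step propagation are all identical to what the paper does. The only genuine difference is in the $\pt\big(H(m,s)\big)\ge 2$ argument: the paper observes that a minimum zero forcing set contains at most one non-pendant vertex and hence some missing pendant $p_{i,s}$ lies at distance at least $2$ from $B$, whereas you split into the cases $v_0\in B$ and $v_0\notin B$ and exhibit an explicit surviving white vertex in each. Both arguments are short and elementary; yours is a touch more case-laden, the paper's distance observation is a bit slicker, but neither gains anything substantive over the other.
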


\begin{figure}[h]
    \centering
    \includegraphics[width=.5\linewidth]{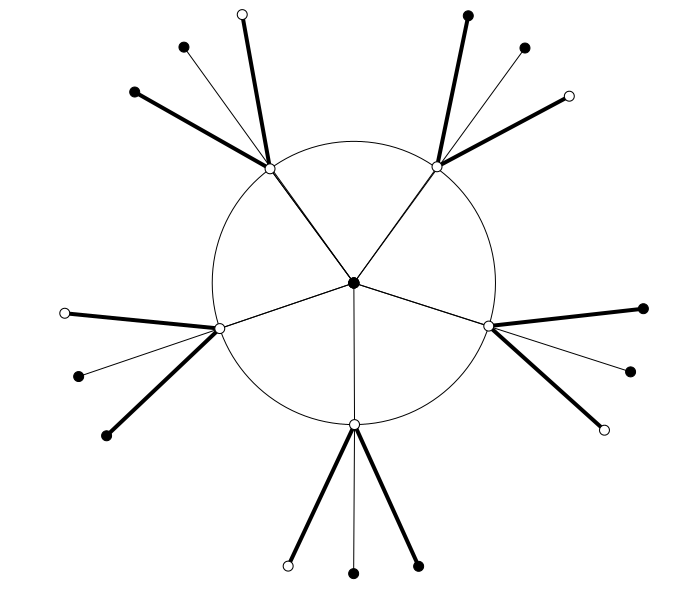}
    \caption{Zero forcing set and chain set of $H(5,3)$ with chain edges in bold}
    \label{GenHelmforce}
\end{figure}

\begin{proof}
	\indent First, we would like to establish $m(s-1)+ 1$ as a lower bound for the zero forcing number. We will do this using two smaller claims. \\
	\indent We claim if $B$ is a zero forcing set of $H(m,s)$, then for each $i$, $\big{\lvert} \{p_{i,j}\}_{j=1}^s \cap B \big{\rvert} \geq s-1$. To show this, suppose by way of contradiction there exists a zero forcing set $B$ of $H(m,s)$ for which there exists $i_0$ such that $\big{\lvert} \{p_{i_0,j}\}_{j=1}^s \cap B \big{\rvert} \leq s-2$. Let $j_1,j_2 \in \{ 1,2, \dots,s\}$ be distinct such that $p_{i_0,j_1}, p_{i_0,j_2} \notin B$. Since $v_{i_0}$ is the only neighbor of $p_{i_0,j_1}$ and $p_{i_0,j_2}$ it is the only vertex which could force them. It can be seen $v_{i_0}$ can do no forcing since $p_{i_0,j_1}, p_{i_0,j_2} \notin B$ and are neighbors of $v_{i_0}$. Therefore, $B$ is not a zero forcing set of $H(m,s)$, and in particular, any zero forcing set of $H(m,s)$ must be such that for each $i$, $\big{\lvert} \{p_{i,j}\}_{j=1}^s \cap Z \big{\rvert} \geq s-1$.\\
	\indent Our second claim states $\Z\big{(}H(m,s)\big{)} \geq m(s-1)  +1$. Let $B$ be a zero forcing set of $H(m,s)$. By our first claim we can assume that up to isomorphism $\{p_{i,j}\}_{i=1,}^m{}_{j=1}^{s-1} \subseteq B$. Then $|B| \geq \big{|} \{p_{i,j}\}_{i=1,}^m{}_{j=1}^{s-1} \big{|} = m(s-1)$. Suppose by way of contradiction $|B| = m(s-1)$. We know in this case that without loss of generality $B = \{p_{i,j}\}_{i=1,}^m{}_{j=1}^{s-1} $. For each $i$, $p_{i,1}$ can force $v_i$. At this point, each $v_i$ has two white neighbors specifically $p_{i,s}$ and $v_0$. Since no other vertex is adjacent to $p_{i,s}$ or $v_0$, neither ever get forced. Hence, $B$ can not be a zero forcing set. So for any zero forcing set $B$, $|B| > m(s-1)$.
	
	Next, we will show $\Z\big{(}H(m,s)\big{)}\leq m(s-1) +1$, establishing an upper bound for the zero forcing number (and thus completing our proof of the zero forcing number) while simultaneously establishing an upper bound on the propagation time by keeping track of the forced vertices. To this end, we want to take a specific zero forcing set of size $m(s-1)+ 1 $ and show that it can force the generalized helm graph. \\
	\indent Let $B = \{v_0\} \cup  \{p_{i,j}\}_{i=1,}^m{}_{j=1}^{s-1}$. We claim $B$ is a zero forcing set and construct a relaxed chronology of forces $\mathcal F$ of $B$ on $H(m,s)$. Note for each $i$, $p_{i,1} \rightarrow v_i$ since $v_i$ is the only neighbor of $p_{i,1}$. So $E_{\mathcal F}^{[1]} = B \cup \{v_i\}_{i=1}^m$. For each $i$, $v_i \rightarrow p_{i,s}$ since $p_{i,s}$ is the only neighbor of $v_i$ not in $E_{\mathcal F}^{[1]}$. Thus $E_{\mathcal F}^{[2]} = E_{\mathcal F}^{[1]} \cup \{p_{i,s}\}_{i=1}^m = V\big{(}H(m,s)\big{)}$. Therefore $H(m,s)$ has been forced with a zero forcing set of size $m(s-1)+ 1$ in two time-steps, thus $\Z\big{(}H(m,s)\big{)} \leq m(s-1)+ 1$ and $\pt\big{(}H(m,s)\big{)}\leq 2.$\\
	\indent Since there is only one vertex in $B$ not in $\{p_{i,j}\}_{i=1,}^m{}_{j=1}^{s-1}$, there is at most one spoke vertex in $B$, say $v_{i_0}$. Then for any $p_{i,s}$, with $i \not = i_0$, not in $B$, there is a distance of at least 2 from $p_{i,s}$ to an element of $B$. Thus $\pt\big{(}H(m,s)\big{)}\geq 2$.  Finally we have that the zero forcing number of $H(m,s)$ must be $ m(s-1)+ 1$ and the propagation time must be $2$.
\end{proof}

\section{Conclusion}

This concludes our study of gear graphs and helm graphs. A natural follow up question is how combining the two classes of graphs by starting with a wheel graph and adding both pendant vertices at spoke vertices as well as intermediate vertices between spoke vertices might affect the zero forcing numbers and propagation times of the resulting class of graphs.  This question forms a basis for research currently being pursued by our team. 

\section*{Acknowledgements}

The research of Kanno Mizozoe was supported by the Trinity College Summer Research Program.  We would also like to thank the Undergrad Research Incubator Program at the University of North Texas for its support.

\bibliographystyle{plain}
\bibliography{helmgear}

\begin{thebibliography}{1}

\bibitem{AIM}
{AIM Minimum Rank -- Special Graphs Work Group (F.~Barioli, W.~Barrett,
  S.~Butler, S.M.~Cioaba, D.~Cvetkovi\'c, S.M.~Fallat, C.~Godsil, W.~Haemers,
  L.~Hogben, R.~Mikkelson, S.~Narayan, O.~Pryporova, I.~Sciriha, W.~So,
  D.~Stevanovi\'c, H.~van der Holst, K.~van der Meulen, and A.~Wangsness
  Wehe)}.
\newblock Zero forcing sets and the minimum rank of graphs.
\newblock {\em Linear Algebra Appl.}, 428:1628--1648, 2008.

\bibitem{param}
F.~Barioli, W.~Barrett, S.M. Fallat, H.T. Hall, L.~Hogben, B.~Shader,
  P.~van~den Driessche, and H.~van~der Holst.
\newblock Zero forcing parameters and minimum rank problems.
\newblock {\em Linear Algebra Appl.}, 433:401--411, 2010.

\bibitem{paramlong}
F.~Barioli, W.~Barrett, S.M. Fallat, H.T. Hall, L.~Hogben, B.~Shader,
  P.~van~den Driessche, and H.~van~der Holst.
\newblock Parameters related to tree-width, zero forcing, and maximum nullity
  of a graph.
\newblock {\em J. Graph Theory}, 72:147--177, 2013.

\bibitem{powdomzf}
D.J. Brueni and L.S. Heath.
\newblock The {PMU} placement problem.
\newblock {\em SIAM J. Discrete Math.}, 19(3):744--761, 2005.

\bibitem{graphinfect}
D.~Burgarth and V.~Giovannetti.
\newblock Full control by locally induced relaxation.
\newblock {\em Phys. Rev. Lett.}, 99(100501), 2007.

\bibitem{powdom}
T.W. Haynes, S.M. Hedetniemi, S.T. Hedetniemi, and M.A. Henning.
\newblock Domination in graphs applied to electric power networks.
\newblock {\em SIAM J. Discrete Math.}, 15(4):519--529, 2002.

\bibitem{PIPs}
L.~Hogben, M.~Hunnell, K.~Liu, H.~Schuerger, B.~Small, and Y.~Zhang.
\newblock New structures and their applications to variants of zero forcing and
  propagation time.
\newblock {\em arXiv preprint arXiv:2307.10548}, 2023.

\bibitem{proptime}
L.~Hogben, M.~Huynh, N.~Kingsley, S.~Meyer, S.~Walker, and M.~Young.
\newblock Propogation time for zero forcing on a graph.
\newblock {\em Linear Algebra Appl.}, 160(13-14):1994--2005, 2012.

\end{thebibliography}

\end{document}